\begin{document}

\def\fl#1{\left\lfloor#1\right\rfloor}
\def\cl#1{\left\lceil#1\right\rceil}
\def\ang#1{\left\langle#1\right\rangle}
\def\stf#1#2{\left[#1\atop#2\right]} 
\def\sts#1#2{\left\{#1\atop#2\right\}}
\def\eul#1#2{\left\langle#1\atop#2\right\rangle}
\def\N{\mathbb N}
\def\Z{\mathbb Z}
\def\R{\mathbb R}
\def\C{\mathbb C}

\newtheorem{theorem}{Theorem}
\newtheorem{Prop}{Proposition}
\newtheorem{Cor}{Corollary}
\newtheorem{Lem}{Lemma}
\newtheorem{Rem}{Remark}

\title{The Frobenius number for sequences of triangular numbers associated with number of solutions
}

\author{
Takao Komatsu 
\\
\small Department of Mathematical Sciences, School of Science\\[-0.8ex]
\small Zhejiang Sci-Tech University\\[-0.8ex]
\small Hangzhou 310018 China\\[-0.8ex]
\small \texttt{komatsu@zstu.edu.cn} 
}

\date{
\small MR Subject Classifications: Primary 11D07; Secondary 05A15, 05A17, 05A19, 11B68, 11D04, 11P81 
}

\maketitle
 
\begin{abstract} 
The famous linear diophantine problem of Frobenius is the problem to determine the largest integer (Frobenius number) whose number of representations in terms of $a_1,\dots,a_k$ is at most zero, that is not representable. In other words, all the integers greater than this number can be represented for at least one way. One of the natural generalizations of this problem is to find the largest integer (generalized Frobenius number) whose number of representations is at most a given nonnegative integer $p$. It is easy to find the explicit form of this number in the case of two variables. However, no explicit form has been known even in any special case of three variables. 
In this paper we are successful to show explicit forms of the generalized Frobenius numbers of the triples of triangular numbers. When $p=0$, their Frobenius number is given by Robles-P\'erez and Rosales in 2018. 
\\
{\bf Keywords:} Frobenius problem, Frobenius numbers, number of representations, triangular numbers 
\end{abstract}

\section{Introduction}  

For integer $k\ge 2$, consider a set of positive integers $A=\{a_1,\dots,a_k\}$ with $\gcd(A)=\gcd\{a_1,\dots,a_k\}=1$.  Let ${\rm NR}(A)={\rm NR}(a_1,a_2,\dots,a_k)$ denote the set of positive integers nonrepresentable as nonnegative integer combinations of $a_1,a_2,\dots,a_k$. Then, the well-known linear Diophantine problem, posed by Sylvester \cite{sy1884}, but known as the Frobenius problem, is the problem to determine the Frobenius number $g(A)$ and the Sylvester number $n(A)$, defined by 
$$
g(A)=\max{\rm NR}(A)\quad\hbox{and}\quad n(A)=|{\rm NR}(A)|\,,   
$$ 
respectively. The Frobenius Problem has been also known as the {\it Coin Exchange Problem} (or Postage Stamp Problem / Chicken McNugget Problem), which has a long history and is one of the problems that has attracted many people as well as experts. 
For two variables $A=\{a,b\}$, it is shown that  
\begin{equation}
g(a,b)=(a-1)(b-1)-1\quad\hbox{and}\quad n(a,b)=\frac{(a-1)(b-1)}{2} 
\label{eq:g-n}
\end{equation}   
(\cite{sy1882,sy1884}). However, for three or more variables, $g(A)$ cannot be given by any set of closed formulas which can be reduced to a finite set of certain polynomials (\cite{cu90}). Only some special cases, explicit closed formulas have been found, including arithmetic, geometric-like, Fibonacci, Mersenne, and triangular (see \cite{RR18} and references therein).  
The related problem of determining the function 
$$
s(A)=\sum_{n\in{\rm NR}(A)}n\,,  
$$
which is called Sylvester sum nowadays, is introduced by Brown and Shiue \cite{bs93} and for two variables $A=\{a,b\}$, it is shown that  
\begin{equation}
s(a,b)=\frac{1}{12}(a-1)(b-1)(2 a b-a-b-1)\,. 
\label{eq:s}
\end{equation} 
Recently, the Sylvester weighted sum \cite{KZ,KZ0} is also introduced and studied.     

On the other hand, to find the number of representations $d(n;A)=d(n;a_1,a_2,\dots,a_k)$ to $a_1 x_1+a_2 x_2+\dots+a_k x_k=n$ for a given positive integer $n$ is also one of the most important and interesting topics. This number is equal to the coefficient of $x^n$ in $1/(1-x^{a_1})(1-x^{a_2})\cdots(1-x^{a_k})$ (\cite{sy1882}).   
Sylvester \cite{sy1857} and Cayley \cite{cayley} showed that $d(n;a_1,a_2,\dots,a_k)$ can be expressed as the sum of a polynomial in $n$ of degree $k-1$ and a periodic function of period $a_1 a_2\cdots a_k$. For two variables, a formula for $d(n;a_1,a_2)$ is obtained in \cite{tr00}. For three variables in the pairwise coprime case $d(n;a_1,a_2,a_3)$, in \cite{ko03}, the periodic function part is expressed in terms of trigonometric functions. However, the calculation becomes very complicated for larger $a_1,a_2,a_3$.  
In \cite{bgk01}, the explicit formula for the polynomial part is derived by using Bernoulli numbers. 

In this paper, we are interested in one of the general types of Frobenius numbers, which focuses on the number of solutions. For a given nonnegative integer $p$, consider the largest integer $g(A;p)=g(a_1,a_2,\dots,a_k;p)$ such that the number of expressions that can be represented by $a_1,a_2,\dots,a_k$ is at most $p$. That is, all integers larger than $g(A;p)$ have at least $p+1$ representations  or more. When $p=0$, $g(A)=g(A;0)$ is the original Frobenius number.  
One can consider a slightly modified number, that is, the largest integer $g'(A;p)$ such that the number of expressions is exactly $p$ (\cite{bk11}). However, for some cases any positive integer does not have exactly $p$ representations, in particular, when $p$ becomes larger. In addition, $p_1<p_2$ does not necessarily imply $g'(A;p_1)<g'(A;p_2)$. See Remark \ref{rem:4-5-6} below.   
Therefore, it would be better to treat with $g(A;p)$. Of course, after knowing $g(A;p)$ and $g(A;p-1)$, we can also get $g'(A;p)$. 
In the same stream, the generalized Sylvester number $n(A;p)=n(a_1,a_2,\dots,a_k;p)$ and generalized Sylvester sum $s(A;p)=s(a_1,a_2,\dots,a_k;p)$ are defined by 
$$
n(A;p)=\sum_{d(n;A)\le p}1\quad\hbox{and}\quad s(A;p)=\sum_{d(n;A)\le p}n\,, 
$$ 
respectively.   

For two variables, it is very easy to see that 
\begin{align}
g(a,b;p)&=(p+1)a b-a-b\,,
\label{eq:gp}\\
n(a,b;p)&=\frac{1}{2}\bigl((2 p+1)a b-a-b+1\bigr)\,,
\label{eq:np}\\
s(a,b;p)&=\frac{1}{2}a b p\bigl(a b(p+1)-a-b\bigr)\notag\\
&\quad +\frac{1}{12}(a-1)(b-1)(2 a b-a-b-1)\,.  
\label{eq:sp}   
\end{align} 
(see \cite{bb20,bk11}). When $p=0$, the classical results of $g(a,b)=g(a,b;0)$, $n(a,b)=n(a,b;0)$ in (\ref{eq:g-n}) and $s(a,b)=s(a,b;0)$ in (\ref{eq:s}) are recovered.  However, for three variables, any formula even for special triples has not been known. Recently in \cite{Ko-repunit}, some closed formulas of $g(A,p)$ and $n(A,p)$ are given when $A$ is a triple of repunits.   

In this paper, we give explicit formulas of $g(t_n,t_{n+1},t_{n+2};p)$, where $t_n=\binom{n+1}{2}$ are triangular numbers, which count the number of dots composing an equilateral triangle.  When $p=0$, the Frobenius number of three consecutive triangular numbers is give in \cite[Proposition 6]{RR18}:   
\begin{equation}
g(t_n,t_{n+1},t_{n+2})=\fl{\frac{n}{2}}(t_n+t_{n+1}+t_{n+2}-1)-1 
\label{eq:rr18}
\end{equation} 
or 
\begin{equation}
g(t_n,t_{n+1},t_{n+2})=\begin{cases}
\frac{(3 n-3)(n+1)(n+2)}{4}-1&\text{if $n$ is odd};\\ 
\frac{(3 n)(n+1)(n+2)}{4}-1&\text{if $n$ is even}. 
\end{cases}
\label{eq:rr18a}
\end{equation}

\section{Preliminaries}  

In order to prove the results, we need the following lemmas.  

\begin{Lem}[{\rm \cite[Lemma 3]{RR18}}] 
$$ 
\gcd(t_{n+1},t_{n+2})=\begin{cases}         
\dfrac{n+2}{2}&\text{if $n$ is even};\\  
n+2&\text{if $n$ is odd}\,. 
\end{cases} 
$$
\label{lem-rr18}
\end{Lem}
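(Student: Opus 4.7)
The plan is to compute the gcd directly from the closed form $t_m = m(m+1)/2$, splitting on the parity of $n$ so that the factor of $2$ in the denominator can be absorbed into the right numerator in each case.

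First I would write
\begin{equation*}
t_{n+1}=\frac{(n+1)(n+2)}{2},\qquad t_{n+2}=\frac{(n+2)(n+3)}{2},
\end{equation*}
and observe that both triangular numbers share a factor coming from $n+2$. The idea is to pull this common factor out and reduce the problem to the gcd of two integers that differ by a small amount, namely integers of the form $n+1$ and $n+3$, or their halves, which are either coprime outright or have gcd at most $2$.

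In the case $n$ even, $n+2$ is even so I would write $t_{n+1}=(n+1)\cdot\frac{n+2}{2}$ and $t_{n+2}=\frac{n+2}{2}\cdot(n+3)$, giving
\begin{equation*}
\gcd(t_{n+1},t_{n+2})=\frac{n+2}{2}\cdot\gcd(n+1,n+3)=\frac{n+2}{2}\cdot\gcd(n+1,2)=\frac{n+2}{2},
\end{equation*}
since $n+1$ is odd. In the case $n$ odd, $n+2$ is odd but $n+1$ and $n+3$ are even, so I would write $t_{n+1}=(n+2)\cdot\frac{n+1}{2}$ and $t_{n+2}=(n+2)\cdot\frac{n+3}{2}$, whence
\begin{equation*}
\gcd(t_{n+1},t_{n+2})=(n+2)\cdot\gcd\!\left(\tfrac{n+1}{2},\tfrac{n+3}{2}\right)=n+2,
\end{equation*}
because $\frac{n+3}{2}-\frac{n+1}{2}=1$ forces that inner gcd to equal $1$.

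There is no real obstacle here; the only subtlety is making sure that the factor of $2$ in the denominator of the triangular number formula is paired with an even numerator before invoking the standard fact $\gcd(m,m+2)\in\{1,2\}$. Once the parity split is made, the computation is immediate and matches the claimed formula exactly.
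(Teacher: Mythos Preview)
Your argument is correct. The parity split cleanly isolates the common factor $n+2$ (or $(n+2)/2$) and reduces to the trivial observation that $\gcd(n+1,n+3)=1$ when $n+1$ is odd, and $\gcd\!\left(\tfrac{n+1}{2},\tfrac{n+3}{2}\right)=1$ always since these are consecutive integers.

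There is nothing to compare against in this paper: the lemma is quoted from \cite[Lemma~3]{RR18} without proof here, so the paper itself does not supply an argument. Your direct computation is the standard one and would serve perfectly well as a self-contained proof if the paper needed one.
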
  

\begin{Lem}[{\rm \cite[Lemma 4]{RR18}}] 
$$ 
\gcd(t_n,t_{n+1},t_{n+2})=1\,.
$$
\label{lem-rr4}
\end{Lem}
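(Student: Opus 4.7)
The plan is to exploit the defining recurrence for triangular numbers, namely $t_{k+1} - t_k = k+1$, which follows at once from $t_k = k(k+1)/2$. In particular, I would first record the two consecutive differences
\begin{equation*}
t_{n+1} - t_n = n+1 \qquad \text{and} \qquad t_{n+2} - t_{n+1} = n+2.
\end{equation*}

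Next, let $d = \gcd(t_n, t_{n+1}, t_{n+2})$. Since $d$ divides each of $t_n, t_{n+1}, t_{n+2}$, it divides any $\Z$-linear combination of them; in particular $d \mid (t_{n+1} - t_n) = n+1$ and $d \mid (t_{n+2} - t_{n+1}) = n+2$. Therefore $d$ divides the difference $(n+2) - (n+1) = 1$, forcing $d = 1$.

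This argument is completely elementary and bypasses Lemma \ref{lem-rr18} entirely, so I do not anticipate a real obstacle; the only thing to verify is the single algebraic identity $t_{k+1} - t_k = k+1$. Alternatively, one could derive the same conclusion by splitting on the parity of $n$ and combining Lemma \ref{lem-rr18} with the two elementary gcd computations $\gcd(t_n, (n+2)/2) = 1$ (for $n$ even) and $\gcd(t_n, n+2) = 1$ (for $n$ odd), but the difference approach above is noticeably cleaner and is the route I would present.
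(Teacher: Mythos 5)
Your argument is correct: the identity $t_{k+1}-t_k=k+1$ is immediate from $t_k=k(k+1)/2$, and any common divisor $d$ of the three triangular numbers then divides both $n+1$ and $n+2$, hence divides their difference $1$. Note that the paper does not prove this lemma at all --- it imports it directly from \cite[Lemma 4]{RR18} --- so there is nothing internal to compare against; your two-line difference argument is a clean, self-contained justification that indeed bypasses Lemma \ref{lem-rr18} entirely, whereas the cited source organizes the computation around the parity split recorded in that lemma.
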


Without loss of generality, we assume that $a_1<a_2<\cdots<a_k$.   
For each $0\le i\le a_1-1$, we introduce the positive integer $m_i^{(p)}$ congruent to $i$ modulo $a_1$ such that the number of representations of $m_i^{(p)}$ is bigger than or equal to $p+1$ and that of $m_i^{(p)}-a_1$ is less than or equal to $p$. Note that $m_0^{(0)}$ is defined to be $0$.      The set 
$$
{\rm Ap}(A;p)={\rm Ap}(a_1,a_2,\dots,a_k;p)=\{m_0^{(p)},m_1^{(p)},\dots,m_{a_1-1}^{(p)}\}\,, 
$$ 
is called the {\it $p$-Ap\'ery set} of $A=\{a_1,a_2,\dots,a_k\}$ for a nonnegative integer $p$ (see, \cite{Apery}), which is congruent to the set 
$$
\{0,1,\dots,a_1-1\}\pmod{a_1}\,. 
$$

\begin{Lem}[{\rm \cite{Ko-p}}] 
For $k\ge 2$, let $A=\{a_1,a_2,\dots,a_k\}$ with $\gcd(a_1,a_2,\dots,a_k)=1$. 
For integers $p$ and $\mu$ with $p\ge 0$ and $\mu\ge 1$, we have 
\begin{align*} 
&s_\mu(A;p):=\sum_{d(n;A)\le p}n^\mu\\ 
&=\frac{1}{\mu+1}\sum_{\kappa=0}^{\mu}\binom{\mu+1}{\kappa}B_{\kappa}a_1^{\kappa-1}\sum_{i=0}^{a_1-1}\bigl(m_i^{(p)}\bigr)^{\mu+1-\kappa} 
+\frac{B_{\mu+1}}{\mu+1}(a_1^{\mu+1}-1)\,, 
\end{align*} 
where $B_n$ are Bernoulli numbers defined by 
$$
\frac{x}{e^x-1}=\sum_{n=0}^\infty B_n\frac{x^n}{n!}\,. 
$$ 
\label{th-mp}
\end{Lem}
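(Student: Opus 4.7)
The plan is to partition the nonnegative integers with $d(n;A)\le p$ by their residue class modulo $a_1$, express each class in closed form via the $p$-Ap\'ery set, and then collapse the resulting double sum with two classical Bernoulli identities.

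First, I would observe that the map $(x_1,x_2,\dots,x_k)\mapsto(x_1+1,x_2,\dots,x_k)$ injects representations of $n$ into those of $n+a_1$, so $d(n+a_1;A)\ge d(n;A)$. Combined with the definition of the $p$-Ap\'ery set, this yields the equivalence $d(n;A)\le p\iff n<m_i^{(p)}$ whenever $n\ge 0$ and $n\equiv i\pmod{a_1}$. Writing $M_i:=(m_i^{(p)}-i)/a_1$, the nonnegative integers with $d(n;A)\le p$ are thus $\{ja_1+i:0\le i\le a_1-1,\ 0\le j\le M_i-1\}$, and since $\mu\ge 1$ the $n=0$ contribution is harmless, so
$$
s_\mu(A;p)=\sum_{i=0}^{a_1-1}\sum_{j=0}^{M_i-1}(ja_1+i)^\mu.
$$

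Next, I would invoke the telescoping identity $\sum_{j=0}^{N-1}(x+j)^\mu=(B_{\mu+1}(x+N)-B_{\mu+1}(x))/(\mu+1)$, which follows from $B_{\mu+1}(y+1)-B_{\mu+1}(y)=(\mu+1)y^\mu$. Applied with $x=i/a_1$, $N=M_i$ (so $x+N=m_i^{(p)}/a_1$) and then scaled by $a_1^\mu$, it gives
$$
\sum_{j=0}^{M_i-1}(ja_1+i)^\mu=\frac{a_1^\mu}{\mu+1}\bigl(B_{\mu+1}(m_i^{(p)}/a_1)-B_{\mu+1}(i/a_1)\bigr).
$$
Summing over $i$ splits $s_\mu(A;p)$ into two pieces. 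For the $B_{\mu+1}(i/a_1)$ piece, Raabe's multiplication formula $\sum_{i=0}^{a_1-1}B_{\mu+1}(i/a_1)=a_1^{-\mu}B_{\mu+1}$ produces a contribution of $-B_{\mu+1}/(\mu+1)$ after multiplying by $a_1^\mu/(\mu+1)$. For the $B_{\mu+1}(m_i^{(p)}/a_1)$ piece, I would expand $B_{\mu+1}(x)=\sum_{\kappa=0}^{\mu+1}\binom{\mu+1}{\kappa}B_\kappa x^{\mu+1-\kappa}$ and absorb the $a_1^\mu$ factor, so that $(m_i^{(p)}/a_1)^{\mu+1-\kappa}$ turns into $a_1^{\kappa-1}(m_i^{(p)})^{\mu+1-\kappa}$; isolating the $\kappa=\mu+1$ summand and summing over the $a_1$ residues yields an extra $B_{\mu+1}a_1^{\mu+1}/(\mu+1)$. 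Combining the two $B_{\mu+1}$ contributions gives exactly $\frac{B_{\mu+1}}{\mu+1}(a_1^{\mu+1}-1)$, while the $0\le\kappa\le\mu$ terms assemble into the double sum in the statement.

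The only real obstacle is index bookkeeping: tracking the factor $a_1^{\kappa-1}$ that emerges when $a_1^\mu$ is merged into the expansion of $B_{\mu+1}(m_i^{(p)}/a_1)$, and confirming that the two leftover $B_{\mu+1}$ pieces (one from the Raabe identity and one from the $\kappa=\mu+1$ summand) combine precisely into $\frac{B_{\mu+1}}{\mu+1}(a_1^{\mu+1}-1)$. No new analytic input is required beyond the telescoping and Raabe identities for Bernoulli polynomials.
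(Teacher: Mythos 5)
Your argument is correct. Note that the paper itself gives no proof of this lemma; it is quoted verbatim from the reference \cite{Ko-p}, so there is no internal proof to compare against. Your derivation is the natural one and matches the standard route behind such formulas: the monotonicity $d(n+a_1;A)\ge d(n;A)$ plus the defining property of $m_i^{(p)}$ gives the exact characterization $d(n;A)\le p\iff n<m_i^{(p)}$ within each residue class $i\pmod{a_1}$, the resulting power sums telescope via $B_{\mu+1}(y+1)-B_{\mu+1}(y)=(\mu+1)y^{\mu}$, and the multiplication (Raabe) formula $\sum_{i=0}^{a_1-1}B_{\mu+1}(i/a_1)=a_1^{-\mu}B_{\mu+1}$ together with the $\kappa=\mu+1$ term of the binomial expansion of $B_{\mu+1}\bigl(m_i^{(p)}/a_1\bigr)$ produces precisely the constant $\frac{B_{\mu+1}}{\mu+1}(a_1^{\mu+1}-1)$. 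All the bookkeeping you flag checks out, including the harmless $n=0$ term (covered by $\mu\ge1$ and the convention $m_0^{(0)}=0$).
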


When $\mu=0,1$ in Lemma \ref{th-mp}, together with $g(A;p)$ we have known formulae for the generalized Frobenius number, the generalized Sylvester number and the generalized Sylvester sum (see \cite{bk11}). 

\begin{Lem}  
Let $k$ and $p$ be integers with $k\ge 2$ and $p\ge 0$.  
Assume that $\gcd(A)=1$.  We have 
\begin{align}  
g(A;p)&=\max_{0\le i\le a_1-1}m_i^{(p)}-a_1
\label{mp-g}\,,\\  
n(A;p)&=\frac{1}{a_1}\sum_{i=0}^{a_1-1}m_i^{(p)}-\frac{a_1-1}{2}\,,
\label{mp-n}\\
s(A;p)&=\frac{1}{2 a_1}\sum_{i=0}^{a_1-1}\bigl(m_i^{(p)}\bigr)^2-\frac{1}{2}\sum_{i=0}^{a_1-1}m_i^{(p)}+\frac{a_1^2-1}{12}\,.
\label{mp-s}
\end{align}
\label{cor-mp}
\end{Lem}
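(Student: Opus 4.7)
My plan is to exploit the structure of the $p$-Ap\'ery set directly. The starting point is the elementary monotonicity $d(n+a_1;A)\ge d(n;A)$, which holds because any representation $n=a_1 x_1+\cdots+a_k x_k$ yields a distinct representation of $n+a_1$ by replacing $x_1$ with $x_1+1$. Combined with the defining property of $m_i^{(p)}$ (and the fact that $\gcd(A)=1$ forces $d(n;A)\to\infty$ along each residue class, so $m_i^{(p)}$ exists), this shows that for each residue $i\in\{0,1,\dots,a_1-1\}$, the non-negative integers $n\equiv i\pmod{a_1}$ with $d(n;A)\le p$ are exactly
\[
\{i,\;i+a_1,\;i+2a_1,\;\dots,\;m_i^{(p)}-a_1\},
\]
while those with $d(n;A)\ge p+1$ form the complementary tail $\{m_i^{(p)},m_i^{(p)}+a_1,\dots\}$. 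The convention $m_0^{(0)}=0$ handles the $i=0$, $p=0$ case cleanly, since $d(0;A)=1>0$ means no non-negative multiple of $a_1$ contributes.

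Given this description, formula (\ref{mp-g}) is immediate: the largest element in class $i$ with $d\le p$ is $m_i^{(p)}-a_1$, so $g(A;p)=\max_i(m_i^{(p)}-a_1)=\max_i m_i^{(p)}-a_1$. For (\ref{mp-n}) and (\ref{mp-s}) I would either invoke Lemma \ref{th-mp} with $\mu=0$ and $\mu=1$ respectively (substituting $B_0=1$, $B_1=-1/2$, $B_2=1/6$), or derive them by direct summation: the count of elements in class $i$ is $(m_i^{(p)}-i)/a_1$, and the sum of that arithmetic progression equals $i(m_i^{(p)}-i)/a_1+(m_i^{(p)}-i)(m_i^{(p)}-i-a_1)/(2a_1)$. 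Summing over $i$ and using $\sum_{i=0}^{a_1-1}i=a_1(a_1-1)/2$ together with $\sum_{i=0}^{a_1-1}i^2=a_1(a_1-1)(2a_1-1)/6$, the $i$-dependent residual collapses into the closed-form constants $-(a_1-1)/2$ for (\ref{mp-n}) and $(a_1^2-1)/12$ for (\ref{mp-s}).

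There is no real obstacle here: the only substantive step is the monotonicity/threshold description of the Ap\'ery set, which is essentially a one-line observation; the rest is bookkeeping, and in fact (\ref{mp-n}) and (\ref{mp-s}) can be skipped altogether by quoting Lemma \ref{th-mp}.
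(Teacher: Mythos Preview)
Your proposal is correct and aligns with the paper's treatment. The paper does not give an explicit proof of this lemma; it simply remarks, just before stating it, that ``When $\mu=0,1$ in Lemma~\ref{th-mp}, together with $g(A;p)$ we have known formulae\dots (see \cite{bk11}),'' and then records the resulting identities. Your suggestion to specialize Lemma~\ref{th-mp} at $\mu=0,1$ is therefore exactly the paper's route for (\ref{mp-n}) and (\ref{mp-s}), and your derivation of (\ref{mp-g}) from the monotonicity $d(n+a_1;A)\ge d(n;A)$ is the standard one-line argument the paper implicitly assumes. The direct-summation alternative you sketch is a correct and slightly more self-contained variant, but it is not a genuinely different approach---it is precisely the computation hiding inside the $\mu=0,1$ cases of Lemma~\ref{th-mp}.
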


\begin{Rem} 
When $p=0$, the identities (\ref{mp-g}), (\ref{mp-n}) and (\ref{mp-s}) are reduced to those in \cite{bs62}, \cite{se77} and \cite{tr08}, respectively.  
\end{Rem}

\section{Triangular numbers}  

When the number of variables $k\ge 3$, it is not easy to find an explicit form of Frobenius number, Sylvester number or Sylvester sum.  Nevertheless, for $p=0$, explicit forms have been discovered in some particular cases of sequences, including arithmetic, geometric-like, Fibonacci, Mersenne, and triangular (see \cite{RR18} and references therein) and so on. However, for $p\ge 1$, no explicit form has been found even in such particular cases because it is not easy to find $m_i^{(p)}$ in order to use Lemma \ref{th-mp} or Lemma \ref{cor-mp}. 

The triangular numbers $t_n=n(n+1)/2$ (\cite[A000217]{oeis}), which are elementary class of polygonal numbers described by the early Pythagoreans, represent the numbers of points used to portray equilateral triangular patterns \cite{tat05}. 
From Lemma \ref{lem-rr4}, three consecutive triangular numbers satisfy the condition $g(A)=1$. However, because of Lemma \ref{lem-rr18}, they are not relatively prime and the situation is different for even and odd numbers. Hence, more detailed discussion is needed.

\subsection{$(t_n,t_{n+1},t_{n+2})$ for general $p$}  

We can find explicit formulas for general $n$ and smaller $p$. First, we discuss some general arguments. Later, we show several explicit formulas for smaller $p$.  
 
When $p=0$, from the formula of $g_p(n):=g(t_n,t_{n+1},t_{n+2};p)$ given in (\ref{eq:rr18}) or (\ref{eq:rr18a}), we have  
$$
g_0(2)=17,~g_0(3)=29,~g_0(4)=89,~g_0(5)=125,~g_0(6)=251,g_0(7)=323,
~\dots\,. 
$$  
When $p=1$, from Theorem \ref{th:p=1}, we can have 
$$
g_1(2)=23,~g_1(3)=59,~g_1(4)=119,~g_1(5)=209,~g_1(6)=335,g_1(7)=503,
~\dots\,. 
$$  
When $p=2$, from Theorem \ref{th:p=2}, we can have  
$$
g_2(2)=29,~g_2(4)=149,~g_2(5)=230,~g_2(6)=419,g_2(7)=575,~g_2(8)=899,
~\dots\,. 
$$ 

Before mentioning the formulas for small $p$, we show the main result for general $p\ge 0$.

\begin{theorem} 
Let $p$ be a nonnegative integer.  For even $n$, 
\begin{multline*}
g\left(t_n,t_{n+1},t_{n+2};p(p+1)+\sum_{j=1}^p\cl{\frac{6 j}{n}}\right)\\ 
=\frac{(n+1)(n+2)\bigl(2(n+3)p+3 n\bigr)}{4}-1\,. 
\end{multline*}
For odd $n\ge 3$, 
\begin{multline*}
g\left(t_n,t_{n+1},t_{n+2};\sum_{j=1}^p\cl{\frac{j}{2}\left(1+\frac{3}{n}\right)}\right)\\
=\frac{(n+1)(n+2)\bigl((n+3)p+3(n-1)\bigr)}{4}-1\,. 
\end{multline*}
\label{th:012}
\end{theorem}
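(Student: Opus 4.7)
The plan is to apply Lemma~\ref{cor-mp}, specifically identity~\eqref{mp-g}, reducing the theorem to computing $\max_{0\le i\le t_n-1}m_i^{(P)}$. The first observation is that a representation $m = x t_n + y t_{n+1} + z t_{n+2}$ is determined by the pair $(y,z)\ge 0$ (since $x=(m-y t_{n+1}-z t_{n+2})/t_n$ is forced), so $m_i^{(P)}$ equals the $(P{+}1)$-st smallest element (with multiplicity) of $\mathcal{M}_i=\bigl\{y t_{n+1}+z t_{n+2}:y,z\ge 0,\ y t_{n+1}+z t_{n+2}\equiv i\pmod{t_n}\bigr\}$. For even $n=2s$, the Chinese Remainder Theorem applied to $t_n=s(2s+1)$ gives $t_{n+1}\equiv(1,0)$ and $t_{n+2}\equiv(3,1)\pmod{(s,2s+1)}$; hence for $i\leftrightarrow(i_1,i_2)$ the pairs in $\mathcal{M}_i$ lie on ``lines'' $z=i_2+k(2s+1)$, $y=r_k+js$ ($k,j\ge 0$), with $r_k=(i_1-3i_2-3k)\bmod s$, and within each line the values form an arithmetic progression of common difference $s\,t_{n+1}=(s+1)t_n$. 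For odd $n$ one uses the analogous CRT with $t_n=n\cdot(n+1)/2$, the factor $s$ being replaced by $n$ and $2s+1$ by $(n+1)/2$.

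As the candidate maximum I take $V_p=(s-1)\,t_{n+1}+(n+p(n+1))\,t_{n+2}$ for even $n$, and $V_p=(n-1)\,t_{n+1}+\frac{n-1+p(n+1)}{2}\,t_{n+2}$ for odd $n$. Using the key syzygy $(n+3)t_{n+1}=(n+1)t_{n+2}$, a short manipulation confirms that $V_p$ equals the right-hand side of the theorem plus $t_n$. The residue $i^*$ of $V_p$ modulo $t_n$ is read off via CRT, and applying the appropriate syzygy shift---$(y,z)\mapsto(y+(n+3),z-(n+1))$ for even $n$ or $(y,z)\mapsto(y+(n+3)/2,z-(n+1)/2)$ for odd $n$---to the distinguished pair produces exactly $p+1$ nonnegative pairs realizing $V_p$. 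The combinatorial heart of the proof is counting the pairs in $\mathcal{M}_{i^*}$ with value strictly below $V_p$. For even $n$, writing $V_p-B_k=[(s-1)-r_k+(p-k)(n+3)]\,t_{n+1}$ (where $B_k$ is the base of line $k$), the number of valid $y$ on line $k$ is
$$L_k=\cl{\frac{(s-1)-r_k+(p-k)(n+3)}{s}}=2(p-k)+1+\fl{\frac{3(p-k)-1}{s}},$$
using that $r_{p-j}=(3j-1)\bmod s$ and the integer identity $3j-((3j-1)\bmod s)=1+s\fl{(3j-1)/s}$. Re-indexing with $j=p-k$ and invoking the elementary identity $\cl{3j/s}=\fl{(3j-1)/s}+1$, the total becomes $\sum_{k=0}^{p-1}L_k=p(p+1)+\sum_{j=1}^p\cl{6j/n}=P$, exactly the parameter in the theorem. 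Hence $d(V_p-t_n)=P$ and $d(V_p)\ge P+1$, giving $m_{i^*}^{(P)}=V_p$. The odd-$n$ case runs in parallel and yields $L_{p-\ell}=\cl{\ell(n+3)/(2n)}=\cl{(\ell/2)(1+3/n)}$, summing to the $P$ appearing in the odd-$n$ statement.

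The main obstacle is the global comparison: one must verify that $m_i^{(P)}\le V_p$ holds for every $i\neq i^*$, so that $V_p$ is indeed the maximum of the Apéry set. I would establish this by proving that every integer $m>V_p-t_n$ satisfies $d(m)\ge P+1$. Concretely, for each residue shift $r\in\{1,\ldots,t_n-1\}$, the integer $V_p-t_n+r$ lies in residue class $(i^*+r)\bmod t_n$; one constructs $P+1$ representations of it by transporting the $P+1$ representations of $V_p$ through the two syzygies $(n+3)t_{n+1}=(n+1)t_{n+2}$ and $n\,t_{n+1}=(n+2)t_n$, together with appropriate adjustments of $x$ (adding copies of $t_n$). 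Combined with $d(V_p-t_n)=P$ from the preceding paragraph, this yields $g(A;P)=V_p-t_n$, completing the proof in both parities.
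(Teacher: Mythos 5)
Your first two paragraphs are correct and, once the language is translated, amount to essentially the same computation the paper performs: the paper organizes the count through the identity $d\bigl(\frac{n+2}{2}m;t_n,t_{n+1},t_{n+2}\bigr)=\sum_{j\ge 0}d(m-jt_n;n+1,n+3)$ (obtained by dividing out $\gcd(t_{n+1},t_{n+2})$) together with the two-variable values $g(n+1,n+3;k)$; your ``lines'' are exactly that index $j$, your $V_p$ is the paper's $\frac{n+2}{2}g(n+1,n+3;p)+\bigl(\frac{n+2}{2}-1\bigr)t_n$, and your ceiling bookkeeping reproduces the paper's counts $2+\lceil 6j/n\rceil-\lceil 6(j-1)/n\rceil$ after the re-indexing $\ell=p-k$. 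I checked the identities you rely on (e.g.\ $(3n-1)(n+2)^2=n(n+1)(3n+8)-4$ in the even case, the analogous odd-case identity, and $\lceil m/s\rceil=\lfloor(m-1)/s\rfloor+1$) and they hold, so $d(V_p-t_n)=P$ and $m_{i^*}^{(P)}=V_p$ are established.

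The genuine gap is in your third paragraph. The two relations you invoke, $(n+3)t_{n+1}=(n+1)t_{n+2}$ and $n\,t_{n+1}=(n+2)t_n$, preserve the represented integer exactly, and adding copies of $t_n$ changes it only by multiples of $t_n$; hence everything you can build from the $P+1$ representations of $V_p$ stays in the residue class $i^*\pmod{t_n}$. You can therefore never manufacture representations of $V_p-t_n+r$ for $r\not\equiv 0\pmod{t_n}$ by ``transporting'' as described, and the claim $m_i^{(P)}\le V_p$ for $i\ne i^*$ --- equivalently, that every integer exceeding $V_p-t_n$ has at least $P+1$ representations --- remains unproved. What is actually needed is to run your line-counting argument in an arbitrary residue class $i$ and show that the $(P+1)$-st smallest element of $\mathcal M_i$ never exceeds $V_p$; that is a separate (if routine-looking) piece of work, not a corollary of the class-$i^*$ computation. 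You should be aware that the paper's own proof stops at the same point --- it verifies only that the stated value has exactly the advertised number of representations and leaves the maximality over all residue classes implicit --- so you have correctly isolated the missing step, but the mechanism you propose for closing it cannot work as stated.
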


\begin{Rem} 
This theorem implies that any positive integer $N$ is expressed more than 
$$  
P':=\begin{cases}
p(p+1)+\sum_{j=1}^p\cl{\frac{6 j}{n}}&\text{($n$:even)}\\
\sum_{j=1}^p\cl{\frac{j}{2}\left(1+\frac{3}{n}\right)}&\text{($n$:odd)}
\end{cases}
$$  
ways in terms of three consecutive triangular numbers $t_n,t_{n+1},t_{n+2}$ if  
$$
N>N':=\begin{cases}
\frac{(n+1)(n+2)\bigl(2(n+3)p+3 n\bigr)}{4}-1&\text{($n$:even)}\\
\frac{(n+1)(n+2)\bigl((n+3)p+3(n-1)\bigr)}{4}-1&\text{($n$:odd)}. 
\end{cases}  
$$
And $N'$ itself is expressed exactly $P'$ ways. 
\end{Rem}  
\bigskip

\begin{proof}[Proof of Theorem \ref{th:012}.] 
First, let $n$ be even. Then by Lemma \ref{lem-rr18},  
$$
\ang{\dfrac{t_{n+1}}{\frac{n+2}{2}},\dfrac{t_{n+2}}{\frac{n+2}{2}}}=\ang{n+1,n+3}\,,  
$$ 
where $\ang{x_1,x_2}=\{a_1 x_1+a_2 x_2|a_1,a_2\ge 0\}$ denotes the all integers generated by the linear combination of $x_1$ and $x_2$ with nonnegative integral coefficients.    
By (\ref{eq:gp}), for $p\ge 0$, 
\begin{equation}
g(n+1,n+3;p)=(n+1)(n+3)p+(n^2+2 n-1)\,. 
\label{frobn:n+1n+3}
\end{equation} 
Thus, the length of the period of the numbers of the representation in terms of $n+1$ and $n+3$ is $(n+1)(n+3)$, and 
\begin{align*}
p&=d\bigl(g(n+1,n+3;p);n+1,n+3\bigr)\\
&=d\left(\frac{n+2}{2}g(n+1,n+3;p);t_{n+1},t_{n+2}\right)
\end{align*}  
with 
$$
d(\nu;t_{n+1},t_{n+2})=0\quad\left(\hbox{if}\quad \nu\not\equiv 0\pmod{\frac{n+2}{2}}\right)\,.
$$ 
Therefore, for a positive integer $m$,  
\begin{align}
&d\left(\frac{n+2}{2}m;t_n,t_{n+1},t_{n+2}\right)\notag\\
&=d\left(\frac{n+2}{2}m;t_{n+1},t_{n+2}\right)+d\left(\frac{n+2}{2}(m-t_n);t_{n+1},t_{n+2}\right)\notag\\
&\quad +d\left(\frac{n+2}{2}(m-2 t_n);t_{n+1},t_{n+2}\right)+\cdots\notag\\
&\quad +d\left(\frac{n+2}{2}\left(m-\fl{\frac{m}{t_n}}t_n\right);t_{n+1},t_{n+2}\right)\notag\\ 
&=d(m;n+1,n+3)+d(m-t_n;n+1,n+3)\notag\\ 
&\quad +d(m-2 t_n;n+1,n+3)+\cdots+d\left(m-t_n\fl{\frac{m}{t_n}};n+1,n+3\right)\,.
\label{eq:m-even-n} 
\end{align} 
Hence, when $m=g(n+1,n+3;p)=(n+1)(n+3)p+(n^2+2 n-1)$, we have 
\begin{align*}
&d\left(\frac{n+2}{2}g(n+1,n+3;p);t_n,t_{n+1},t_{n+2}\right)\\ 
&=\sum_{j=0}^{\fl{g(n+1,n+3;p)/t_n}}d\bigl(g(n+1,n+3;p)-j t_n;n+1,n+3\bigr)\,. 
\end{align*} 
Here, each value of $d\bigl(g(n+1,n+3;p)-j t_n;n+1,n+3\bigr)$ must be equal to any of $0,1,\cdots,p$. For convenience, put 
$$
g_j'=g_{j,p}':=g(n+1,n+3;p)-j t_n\,. 
$$ 
Remember that the largest integer whose number of representations in terms of $n+1$ and $n+3$ is equal to $p$ is given in (\ref{frobn:n+1n+3}). For example, assume that we want to know the values $j$ such that $d((n+1)(n+3)p+(n^2+2 n-1)-j t_n;n+1,n+3)=p-2$. This has been achieved by solving $g_{p-3}<g_j'\le g_{p-2}$. By 
$$
-3(n+1)(n+3)<-j t_n\le -2(n+1)(n+3)\,, 
$$   
we get 
$$
6+\frac{18}{n}>j\ge 4+\frac{12}{n}\,. 
$$   
Hence, 
$$
j=4+\cl{\frac{12}{n}},5+\cl{\frac{12}{n}},\dots,5+\cl{\frac{18}{n}}\,. 
$$ 
It implies that the number of values $(p-2)$ is given by  
$$
2+\cl{\frac{18}{n}}-\cl{\frac{12}{n}}
$$  
among the sequence $d\bigl(g(n+1,n+3;p)-j t_n;n+1,n+3\bigr)$ for $0\le j\le g(n+1,n+3;p)/t_n$. 
Similarly, the numbers of values $p$, $(p-1)$, $\dots$, $1$ are given by 
\begin{align*}
&2+\cl{\frac{6}{n}}\,,\\
&2+\cl{\frac{12}{n}}-\cl{\frac{6}{n}}\,,\\
&\qquad \dots\,,\\
&2+\cl{\frac{6 p}{n}}-\cl{\frac{6(p-1)}{n}}\,, 
\end{align*}
respectively.  
Therefore, 
\begin{align*}
&d\left(\frac{n+2}{2}g(n+1,n+3;p)+\left(\frac{n+2}{2}-1\right)t_n;t_n,t_{n+1},t_{n+2}\right)\\
&=d\left(\frac{(n+1)(n+2)\bigl(2(n+3)p+3 n\bigr)}{4}-1;t_n,t_{n+1},t_{n+2}\right)\\ 
&=p\left(2+\cl{\frac{6}{n}}\right)+(p-1)\left(2+\cl{\frac{12}{n}}-\cl{\frac{6}{n}}\right)\\
&\quad+(p-2)\left(2+\cl{\frac{18}{n}}-\cl{\frac{12}{n}}\right)+\cdots+1\cdot\left(2+\cl{\frac{6 p}{n}}-\cl{\frac{6(p-1)}{n}}\right)\\
&=p(p+1)+\sum_{j=1}^p\cl{\frac{6 j}{n}}\,. 
\end{align*} 

Next, let $n$ be odd. In the odd case the reasoning is similar to that of the even case. However, we have preferred to include all computations because of differences in some details. By Lemma \ref{lem-rr18},  
$$
\ang{\dfrac{t_{n+1}}{n+2},\dfrac{t_{n+2}}{n+2}}=\ang{\frac{n+1}{2},\frac{n+3}{2}}\,. 
$$ 
By (\ref{eq:gp}), for $p\ge 0$, 
\begin{equation}
g\left(\frac{n+1}{2},\frac{n+3}{2};p\right)=\frac{(n+1)(n+3)}{4}p+\frac{n^2-5}{4}\,. 
\label{frobn:n+1n+3odd}
\end{equation} 
Thus, the length of the period of the numbers of the representation in terms of $n+1$ and $n+3$ is $(n+1)(n+3)/4$, and 
\begin{align*}
p&=d\left(g\left(\frac{n+1}{2},\frac{n+3}{2};p\right);\frac{n+1}{2},\frac{n+3}{2}\right)\\
&=d\left((n+2)g\left(\frac{n+1}{2},\frac{n+3}{2};p\right);t_{n+1},t_{n+2}\right)
\end{align*}  
with 
$$
d(\nu;t_{n+1},t_{n+2})=0\quad\left(\hbox{if}\quad \nu\not\equiv 0\pmod{n+2}\right)\,.
$$ 
Therefore, for a positive integer $m$,  
\begin{align}
&d\left((n+2)m;t_n,t_{n+1},t_{n+2}\right)\notag\\
&=d\left((n+2)m;t_{n+1},t_{n+2}\right)+d\left((n+2)(m-t_n);t_{n+1},t_{n+2}\right)\notag\\
&\quad +d\left((n+2)(m-2 t_n);t_{n+1},t_{n+2}\right)+\cdots\notag\\
&\quad +d\left((n+2)\left(m-\fl{\frac{m}{t_n}}t_n\right);t_{n+1},t_{n+2}\right)\notag\\ 
&=d\left(m;\frac{n+1}{2},\frac{n+3}{2}\right)+d\left(m-t_n;\frac{n+1}{2},\frac{n+3}{2}\right)\notag\\ 
&\quad +d\left(m-2 t_n;\frac{n+1}{2},\frac{n+3}{2}\right)+\cdots+d\left(m-t_n\fl{\frac{m}{t_n}};\frac{n+1}{2},\frac{n+3}{2}\right)\,. 
\label{eq:m-odd-n}
\end{align} 
Hence, when $m=g\bigl((n+1)/2,(n+3)/2;p\bigr)=(n+1)(n+3)p/4+(n^2-5)/4$, we have 
\begin{align*}
&d\left((n+2)g\left(\frac{n+1}{2},\frac{n+3}{2};p\right);t_n,t_{n+1},t_{n+2}\right)\\
&=\sum_{j=0}^{\fl{g(\frac{n+1}{2},\frac{n+3}{2};p)/t_n}}d\left(g\left(\frac{n+1}{2},\frac{n+3}{2};p\right)-j t_n;\frac{n+1}{2},\frac{n+3}{2}\right)\,. 
\end{align*} 
Here, each value of $d\bigl(g((n+1)/2,(n+3)/2;p)-j t_n;(n+1)/2,(n+3)/2\bigr)$ must be equal to any of $0,1,\cdots,p$. For convenience, put 
$$
g_j'=g_{j,p}':=g\left(\frac{n+1}{2},\frac{n+3}{2};p\right)-j t_n\,. 
$$ 
Remember that the largest integer whose number of representations in terms of $(n+1)/2$ and $(n+3)/2$ is equal to $p$ is given in (\ref{frobn:n+1n+3}). For example, assume that we want to know the values $j$ such that $d\bigl((n+1)(n+3)p+(n^2+2 n-1)-j t_n;(n+1)/2,(n+3)/2\bigr)=p-2$. This has been achieved by solving $g_{p-3}<g_j'\le g_{p-2}$. By 
$$
-\frac{3(n+1)(n+3)}{4}<-j t_n\le -\frac{2(n+1)(n+3)}{4}\,, 
$$   
we get 
$$
\frac{3}{2}\left(1+\frac{3}{n}\right)>j\ge \frac{2}{2}\left(1+\frac{3}{n}\right)\,. 
$$   
Hence, the number of values $(p-2)$ is given by  
$$
\cl{\frac{3}{2}\left(1+\frac{3}{n}\right)}-\cl{\frac{2}{2}\left(1+\frac{3}{n}\right)}
$$  
among the sequence $d\bigl(g((n+1)/2,(n+3)/2;p)-j t_n;(n+1)/2,(n+3)/2\bigr)$ for $0\le j\le g((n+1)/2,(n+3)/2;p)/t_n$. 
Similarly, the numbers of values $p$, $(p-1)$, $\dots$, $1$ are given by 
\begin{align*}
&\cl{\frac{1}{2}\left(1+\frac{3}{n}\right)}\,,\\
&\cl{\frac{2}{2}\left(1+\frac{3}{n}\right)}-\cl{\frac{1}{2}\left(1+\frac{3}{n}\right)}\,,\\
&\qquad \dots\,,\\
&\cl{\frac{p}{2}\left(1+\frac{3}{n}\right)}-\cl{\frac{p-1}{2}\left(1+\frac{3}{n}\right)}\,, 
\end{align*}
respectively.  
Therefore, 
\begin{align*}
&d\left((n+2)g\left(\frac{n+1}{2},\frac{n+3}{2};p\right)+(n+1)t_n;t_n,t_{n+1},t_{n+2}\right)\\
&=d\left(\frac{(n+1)(n+2)\bigl((n+3)p+3(n-1)\bigr)}{4}-1;t_n,t_{n+1},t_{n+2}\right)\\ 
&=\cl{\frac{1}{2}\left(1+\frac{3}{n}\right)}+(p-1)\left(\cl{\frac{2}{2}\left(1+\frac{3}{n}\right)}-\cl{\frac{1}{2}\left(1+\frac{3}{n}\right)}\right)\\
&\quad+(p-2)\left(\cl{\frac{3}{2}\left(1+\frac{3}{n}\right)}-\cl{\frac{2}{2}\left(1+\frac{3}{n}\right)}\right)+\cdots\\
&\quad+1\cdot\left(\cl{\frac{p}{2}\left(1+\frac{3}{n}\right)}-\cl{\frac{p-1}{2}\left(1+\frac{3}{n}\right)}\right)\\
&=\sum_{j=1}^p\cl{\frac{j}{2}\left(1+\frac{3}{n}\right)}\,. 
\end{align*}  
\end{proof}

\subsection{$(t_n,t_{n+1},t_{n+2})$ for $p=1$}  

We consider the general $n$ case with the number of representations is at most $1$ to give an explicit formula.  
 
\begin{theorem}  
$$
g(t_n,t_{n+1},t_{n+2};1)=n(n+1)(n+2)-1\quad(n\ge 2)\,. 
$$ 
\label{th:p=1}  
\end{theorem}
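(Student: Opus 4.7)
The plan is to exhibit an explicit ``fundamental domain'' $F^{*}\subset\mathbb{Z}_{\ge 0}^3$ of nonneg triples $(x,y,z)$ with two properties: (i) each $(x,y,z)\in F^{*}$ is the \emph{unique} nonneg representation of $N=xt_n+yt_{n+1}+zt_{n+2}$, giving $d(N;t_n,t_{n+1},t_{n+2})=1$; and (ii) any nonneg $(x,y,z)\notin F^{*}$ admits an exchange by a kernel basis element to a distinct nonneg representation, giving $d(N;t_n,t_{n+1},t_{n+2})\ge 2$. Together with the identity $\max_{(x,y,z)\in F^{*}}(xt_n+yt_{n+1}+zt_{n+2})=n(n+1)(n+2)-1$, this yields the theorem: every $N\ge n(n+1)(n+2)$ is representable (since $n(n+1)(n+2)-1>g(t_n,t_{n+1},t_{n+2};0)$ by \eqref{eq:rr18a}) but cannot have a representation in $F^{*}$, so (ii) applies; whereas $N=n(n+1)(n+2)-1$ attains the corner of $F^{*}$ and has a unique representation by (i).

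The kernel of $(x,y,z)\mapsto xt_n+yt_{n+1}+zt_{n+2}$ in $\mathbb{Z}^3$ is generated by the syzygies $(n+2)t_n=n\cdot t_{n+1}$ and $(n+3)t_{n+1}=(n+1)t_{n+2}$, each halved whenever a gcd of $2$ permits. For even $n$ this gives $e_1=((n+2)/2,-n/2,0)$, $e_2=(0,-(n+3),n+1)$; for odd $n$, $e_1=(n+2,-n,0)$, $e_2=(0,-(n+3)/2,(n+1)/2)$. Define
$$F^{*}=\begin{cases}\{0,\ldots,n/2\}\times\{0,\ldots,n/2-1\}\times\{0,\ldots,n\}&n\text{ even,}\\ \{0,\ldots,n+1\}\times\{0,\ldots,(n+1)/2\}\times\{0,\ldots,(n-1)/2\}&n\text{ odd,}\end{cases}$$
with each side length chosen just below the threshold at which some $\pm e_j$-exchange could be applied to $(x,y,z)$ while keeping all coordinates nonneg.

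To verify (i), I would do a sign analysis on $(\alpha,\beta)\in\mathbb{Z}^2\setminus\{(0,0)\}$ for the translate $(x,y,z)+\alpha e_1+\beta e_2$ with $(x,y,z)\in F^{*}$: the $x$-bound of $F^{*}$ is strictly below $(n+2)/2$ resp.\ $n+2$, forcing $x'<0$ whenever $\alpha<0$; the $z$-bound is strictly below $n+1$ resp.\ $(n+1)/2$, forcing $z'<0$ whenever $\beta<0$; and in the remaining cases $\alpha,\beta\ge 0$ (not both zero) the $y$-bound forces $y'<0$ by an analogous one-line comparison. This establishes (i). For (ii), $(x,y,z)\notin F^{*}$ breaks at least one side bound, and the matching $\pm e_j$ then produces a distinct nonneg representation. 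Finally, a routine expansion at the corner of $F^{*}$ reduces in both parities to $\tfrac{1}{4}(4n^3+12n^2+8n-4)=n(n+1)(n+2)-1$.

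The hardest part will be the sign analysis for odd $n$: combined kernel elements like $e_1-e_2$ can produce unusually small single-coordinate changes (e.g., magnitude $(n-3)/2$ in $y$), so the argument must rule them out by the $x$- and $z$-bounds jointly rather than looking at each coordinate in isolation. The base case $n=2$ is degenerate ($y$-side of length $0$) and is checked directly from the single representation $(1,0,2)$ of $N=23$.
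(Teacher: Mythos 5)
Your proposal is correct, but it proves the theorem by a genuinely different route from the paper. The paper exploits Lemma \ref{lem-rr18} to reduce everything to the two-generator semigroup $\ang{n+1,n+3}$ (resp.\ $\ang{\frac{n+1}{2},\frac{n+3}{2}}$ for odd $n$) and counts representations of $\frac{n+2}{2}m$ (resp.\ $(n+2)m$) as the telescoping sum $\sum_j d(m-jt_n;\cdot)$ from (\ref{eq:m-even-n})--(\ref{eq:m-odd-n}), then picks the unique admissible $m$ for $p=1$; your argument instead works directly in $\Z_{\ge 0}^3$ with a kernel basis and a box-shaped fundamental domain. I checked the essential points: the cross product $e_1\times e_2$ equals $-(t_n,t_{n+1},t_{n+2})$ in both parities, so your halved syzygies really do generate the \emph{full} kernel lattice (this is the one step you assert without verification, and it is indispensable for claim (i) --- without it there could be representations differing by kernel vectors outside the span of $e_1,e_2$); the sign analysis closes cleanly in all four sign cases, since $\alpha<0$ forces $x'<0$, $\beta<0$ forces $z'<0$, and $\alpha,\beta\ge 0$ not both zero forces $y'\le (n/2-1)-n/2<0$ (even) or $y'\le\frac{n+1}{2}-\frac{n+3}{2}<0$ (odd), so your worry about mixed elements like $e_1-e_2$ is already absorbed by the $\beta<0$ case; and the corner evaluates to $n(n+1)(n+2)-1$ in both parities, which exceeds $g(t_n,t_{n+1},t_{n+2};0)$ by (\ref{eq:rr18a}). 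What your approach buys is a complete description of the set $\{N: d(N;t_n,t_{n+1},t_{n+2})=1\}$ (namely the image of $F^*$), from which $n(A;1)$ and $s(A;1)$ would also follow, and a self-contained argument not relying on the $p$-generalization machinery; what the paper's approach buys is uniformity in $p$, since the same telescoping identity drives Theorems \ref{th:012}, \ref{th:p=2} and \ref{th:p=3-10}, whereas an explicit fundamental domain for $d(N)\le p$ with $p\ge 2$ would no longer be a box. The $n=2$ remark is harmless but unnecessary: $\{0,\dots,n/2-1\}=\{0\}$ there, and the general sign analysis already applies.
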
  
\begin{proof} 
When $n$ is even, we can choose 
$$
m=\frac{(3 n-1)(n+2)}{2}=g(n+1,n+3;0)+t_n\,. 
$$
in (\ref{eq:m-even-n}).  
Since the only nonnegative integral solution of 
$$
m=a(n+1)+b(n+3)\quad(n\hbox{:even})  
$$ 
is given by $a=(n-2)/2$ and $b=n$, together with the definition of $g(n+1,n+3;0)$, 
we see that 
$$
d(m;n+1,n+3)=1\quad\hbox{and}\quad d(m-j t_n;n+1,n+3)=0\quad(j\ge 1)\,. 
$$  
Since $g(n+1,n+3;1)-t_n=2 n^2+6 n+2-n(n+1)/2>g(n+1,n+3;0)$, the number of representations of $g(n+1,n+3;1)-t_n$ cannot be $0$. So, $m=g(n+1,n+3;1)$ is impossible.   
Hence, by (\ref{eq:m-even-n}), we have 
\begin{align*}
g(t_n,t_{n+1},t_{n+2};1)&=
\frac{n+2}{2}m+\left(\frac{n-2}{2}+1\right)t_n\\
&=n^3+3 n^2+2 n-1\,. 
\end{align*} 
When $n$ is odd with $n\ge 3$, we can choose 
$$
m=\frac{n^2+2 n-1}{2}=g\left(\frac{n+1}{2},\frac{n+3}{2};1\right)\,. 
$$
in (\ref{eq:m-odd-n}). Now, 
$$
m-t_n=\frac{n-1}{2}
$$  
cannot be represented in terms of $(n+1)/2$ and $(n+3)/2$. 
Hence, by (\ref{eq:m-odd-n}), we have 
\begin{align*}
g(t_n,t_{n+1},t_{n+2};1)&=(n+2)m+\left(n+2-1\right)t_n\\
&=n^3+3 n^2+2 n-1\,. 
\end{align*}  
\end{proof}

\subsection{$(t_n,t_{n+1},t_{n+2})$ for $p=2$}  

When $p=2$, we have the following explicit form.  
 
\begin{theorem}  
$$
g(t_n,t_{n+1},t_{n+2};2)=\begin{cases}
\frac{(5 n-3)(n+1)(n+2)}{4}-1&\text{if $n\ge 5$ is odd};\\
\frac{(5 n)(n+1)(n+2)}{4}-1&\text{if $n\ge 2$ is even}\,. 
\end{cases}
$$ 
\label{th:p=2}  
\end{theorem}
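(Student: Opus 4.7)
The strategy mimics that of Theorem \ref{th:p=1}: in each parity I isolate a distinguished ``base'' value in the relevant two-variable semigroup for which the telescoping sum of two-variable representation counts appearing in (\ref{eq:m-even-n}) or (\ref{eq:m-odd-n}) equals exactly $2$, and then rule out larger base values.

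For even $n\ge 2$, set $G=g(n+1,n+3;0)=n^2+2n-1$ and $T=t_n$, and take $m^{*}=G+2T=2n^2+3n-1$. Because (\ref{eq:gp}) gives $g(n+1,n+3;1)=2n^2+6n+2>m^{*}$, both $G+T$ and $G+2T$ lie in $(G,\,g(n+1,n+3;1)]$ and each contributes $1$ to the chain-sum; the term at $G$ vanishes by definition; the term at $G-T=(n^2+3n-2)/2$ vanishes because a residue-class computation modulo the odd number $n+3$ shows this integer is not representable by $n+1$ and $n+3$; and $G-jT\le 0$ for $j\ge 3$. Via (\ref{eq:m-even-n}) this makes $\frac{n+2}{2}m^{*}+(\frac{n+2}{2}-1)t_n=\frac{5n(n+1)(n+2)}{4}-1$ have a three-variable count of exactly $2$. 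For the matching upper bound, set $S(m):=\sum_{j\ge 0}d(m-jT;n+1,n+3)$ and observe $S(m+T)=S(m)+d(m+T;n+1,n+3)\ge S(m)$; it therefore suffices to prove $S(m)\ge 3$ on $(m^{*},m^{*}+T]$, but on that interval each of $m,\,m-T,\,m-2T$ exceeds $G$ and is therefore representable, forcing $S(m)\ge 3$.

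For odd $n\ge 5$ the same plan runs in the semigroup $\ang{(n+1)/2,(n+3)/2}$ with base value $M^{*}=g+T$, where $g=g((n+1)/2,(n+3)/2;0)=(n^2-5)/4$; the drop from $2T$ to $T$ reflects that two-variable counts climb faster in this smaller semigroup. From (\ref{eq:gp}) one reads off $g_1=(n^2+2n-1)/2$ and $g_2=(3n^2+8n+1)/4$, and the inequalities $g+T-g_1=(n-3)(n+1)/4>0$ (which demands $n\ge 5$) and $g_2-(g+T)=3(n+1)/2>0$ together pin $d(g+T;(n+1)/2,(n+3)/2)=2$; combined with $d(g;(n+1)/2,(n+3)/2)=0$ and $g-jT<0$ for $j\ge 1$, the chain-sum at $M^{*}$ is exactly $2$, and (\ref{eq:m-odd-n}) converts this to $(n+2)(g+T)+(n+1)t_n=\frac{(5n-3)(n+1)(n+2)}{4}-1$. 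The upper bound is handled as before: on $(M^{*},M^{*}+T]$, $M>g+T>g_1$ forces $d(M;(n+1)/2,(n+3)/2)\ge 2$ while $M-T>g$ forces $d(M-T;(n+1)/2,(n+3)/2)\ge 1$, so the chain-sum is at least $3$, and its monotonicity along arithmetic progressions of step $T$ extends this to all larger $M$.

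The main technical obstacle I foresee is the non-representability of $(n^2+3n-2)/2$ by $n+1$ and $n+3$ for even $n$; this is the one place where a direct arithmetic argument (rather than bookkeeping against the thresholds from (\ref{eq:gp})) is required. The exclusion $n\ge 5$ in the odd case traces back to the factor $(n-3)$ in $g+T-g_1$, which becomes strictly positive only from $n=5$ onward.
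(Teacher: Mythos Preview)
Your plan follows essentially the same route as the paper: the same base values $m^{*}=g(n+1,n+3;0)+2t_n$ (even case) and $M^{*}=g\bigl(\frac{n+1}{2},\frac{n+3}{2};0\bigr)+t_n$ (odd case) are chosen, and the chain-sum in (\ref{eq:m-even-n})/(\ref{eq:m-odd-n}) is verified to equal $2$; where the paper exhibits explicit representations, you instead bracket the terms between consecutive two-variable thresholds $g(\cdot;p)$, and you add an explicit upper-bound argument that the paper leaves implicit. One small slip to fix in the even case: your chain also contains the term at $G-2T=n-1$, which your claimed inequality $G-jT\le 0$ for $j\ge 3$ does not cover (it is positive for $n\ge 2$); however $n-1<n+1$, so $d(n-1;n+1,n+3)=0$ and the conclusion is unaffected.
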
  

\begin{Rem} 
When $n=3$, no positive integer can be represented in exactly two ways in terms of $(t_3,t_4,t_5)$. In this case, we can include the case where $n=3$ as $g(t_3,t_4,t_5;2)=g(t_3,t_4,t_5;1)=59$. 
\end{Rem} 

\begin{proof}[Proof of Theorem \ref{th:p=2}.] 
Let $n$ be even. The number $p=2$ may be achieved in (\ref{eq:m-even-n}) as $d(m;n+1,n+3)=2$ and $d(m-j t_n;n+1,n+3)=0$ ($j\ge 1$).  
Then the number $p=2$ may be achieved in (\ref{eq:m-even-n}) as $d(m;n+1,n+3)=d(m-t_n;n+1,n+3)=1$ and $d(m-j t_n;n+1,n+3)=0$ ($j\ge 2$). There are two possibilities. But if $m=g(n+1,n+3;1)=2 n^2+6 n+2$, by $g(n+1,n+3;1)-2 t_n=n^2+5 n+2>g(n+1,n+3;0)=n^2+2 n-1$, this case is invalid. If $m=2 n^2+3 n-1=g(n+1,n+3;0)+2 t_n$, 
$$ 
m=(n-1)(n+1)+n(n+3)\quad\hbox{and}\quad m-t_n=\frac{n-2}{2}(n+1)+n(n+3) 
$$ 
are the only $1$ expressions, respectively.  Therefore, we have 
\begin{align*}
g(t_n,t_{n+1},t_{n+2};2)&=\frac{n+2}{2}m+\frac{n}{2}t_n\\
&=\frac{5 n^3+15 n^2+10 n-4}{4}\,. 
\end{align*}
Let $n$ be odd. We can only choose 
$$
m=\frac{(3 n+5)(n-1)}{4}=g\left(\frac{n+1}{2},\frac{n+3}{2};0\right)+t_n\,. 
$$
in (\ref{eq:m-odd-n}). In this case, 
the number $p=2$ is achieved in (\ref{eq:m-even-n}) as $d\bigl(m;(n+1)/2,(n+3)/2\bigr)=2$ and $d\bigl(m-j t_n;n+1,n+3\bigr)=0$ ($j\ge 1$).  In fact, $m$ can be expressed in two ways as 
$$
m=(n-1)\frac{n+1}{2}+\frac{n+1}{2}\frac{n+3}{2}=\frac{n-5}{2}\frac{n+1}{2}+n\frac{n+3}{2}\quad(n\ge 5)
$$ 
and $g(\frac{n+1}{2},\frac{n+3}{2};0)=m-t_n=(n^2-5)/4$ for $j=1$. 
Therefore, we have 
\begin{align*}
g(t_n,t_{n+1},t_{n+2};1)&=(n+2)m+\left(n+2-1\right)t_n\\
&=\frac{5 n^3+12 n^2+n-10}{4}\,. 
\end{align*}  
\end{proof}

\subsection{$(t_n,t_{n+1},t_{n+2})$ for $p\ge 3$}  

When $3\le p\le 10$, we have the following explicit forms.  We omit the details. When $p$ is larger, the arguments become more complicated.  If n is small in the following formulas, they can be calculated individually, so the details are also omitted.  
 
\begin{theorem}  
$$
g(t_n,t_{n+1},t_{n+2};3)=\begin{cases}
\frac{(5 n+3)(n+1)(n+2)}{4}-1&\text{if $n\ge 3$ is odd};\\
\frac{(5 n+6)(n+1)(n+2)}{4}-1&\text{if $n\ge 6$ is even}\,.  
\end{cases}
$$ 
For $n=5$ and $n\ge 7$, we have  
$$
g(t_n,t_{n+1},t_{n+2};4)=\frac{3 n(n+1)(n+2)}{2}-1\,.
$$ 
For $n=6$ and $n\ge 8$, we have  
$$
g(t_n,t_{n+1},t_{n+2};5)=\frac{3(n+1)^2(n+2)}{2}-1\,. 
$$ 
We have 
\begin{align*}
g(t_n,t_{n+1},t_{n+2};6)&=\begin{cases}
\frac{(7 n-3)(n+1)(n+2)}{4}-1&\text{if $n\ge 11$ is odd};\\
\frac{7 n(n+1)(n+2)}{4}-1&\text{if $n\ge 8$ is even}\,, 
\end{cases}\\ 
g(t_n,t_{n+1},t_{n+2};7)&=\begin{cases}
\frac{(7 n+3)(n+1)(n+2)}{4}-1&\text{if $n\ge 5$ is odd};\\
\frac{(7 n+6)(n+1)(n+2)}{4}-1&\text{if $n\ge 6$ is even}\,, 
\end{cases}\\ 
g(t_n,t_{n+1},t_{n+2};8)&=\begin{cases}
\frac{(7 n+9)(n+1)(n+2)}{4}-1&\text{if $n\ge 9$ is odd};\\
\frac{(7 n+12)(n+1)(n+2)}{4}-1&\text{if $n\ge 12$ is even}\,. 
\end{cases}
\end{align*}  
For $n=11$ and $n\ge 13$, we have  
$$
g(t_n,t_{n+1},t_{n+2};9)=2 n(n+1)(n+2)-1\,. 
$$ 
For $n=3,4$ and $n\ge 8$, we have  
$$
g(t_n,t_{n+1},t_{n+2};10)=\frac{(4 n+3)(n+1)(n+2)}{2}-1\,. 
$$ 
\label{th:p=3-10}  
\end{theorem}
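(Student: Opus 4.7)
The plan is to extend the case-by-case analysis of Theorems \ref{th:p=1} and \ref{th:p=2} by exploiting the decomposition formulas (\ref{eq:m-even-n}) and (\ref{eq:m-odd-n}), which reduce representations in $(t_n,t_{n+1},t_{n+2})$ to sums of representations in the two-variable systems $(n+1,n+3)$ (even $n$) or $\bigl((n+1)/2,(n+3)/2\bigr)$ (odd $n$). Some of the stated values follow directly from Theorem \ref{th:012}, while the rest are intermediate and require separate constructions.

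First I would tabulate the sequence $P_q$ from Theorem \ref{th:012} in each parity and each range of $n$. For odd $n\ge 9$ this sequence begins $0,1,3,5,8,\dots$; for even $n\ge 12$ it begins $0,3,8,15,\dots$. Substituting the corresponding internal parameter $q$ into Theorem \ref{th:012} and simplifying yields the stated formulas for $p=3$ (both parities), $p=5$ (odd $n\ge 9$), and $p=8$ (odd $n\ge 9$ and even $n\ge 12$); these cases require no new ingredients.

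For the remaining values $p\in\{4,6,7,9,10\}$ and for the parity/range combinations not covered above, I would mimic the strategy of Theorems \ref{th:p=1} and \ref{th:p=2}: for each such $p$, select a candidate $m$ so that the sum in (\ref{eq:m-even-n}) or (\ref{eq:m-odd-n}) equals exactly $p$, and then form
\[
N=\tfrac{n+2}{2}m+\Bigl(\tfrac{n+2}{2}-1\Bigr)t_n\quad(n\text{ even}),\qquad N=(n+2)m+(n+1)t_n\quad(n\text{ odd}),
\]
which should match the claimed Frobenius number after algebraic simplification. The candidate $m$ is typically of the form $g(n+1,n+3;p')+kt_n$ or $g\bigl((n+1)/2,(n+3)/2;p'\bigr)+kt_n$ for small integers $p',k$ chosen so that the individual contributions $d(m-jt_n;\cdot)$ partition $p$ correctly; for instance, the proof of Theorem \ref{th:p=2} for even $n$ used $m=g(n+1,n+3;0)+2t_n$ to realize the partition $1+1+0+\dots$. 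Maximality of $N$ then reduces to the monotonicity $d(N+t_n;A)\ge d(N;A)+1$ together with the claim that every other residue class modulo $t_n$ attains $p+1$ representations at a value no greater than $N+t_n$.

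The main obstacle is the bookkeeping: enumerating the admissible partitions of $p$ into nonzero ceiling contributions $\cl{6j/n}$ or $\cl{(j/2)(1+3/n)}$, and then ruling out competing residue classes. The restrictions such as $n\ge 11$ odd, $n\ge 12$ even, and $n\ge 13$, together with the isolated inclusions of $n=5,6,11$, arise from small-$n$ collisions where these ceilings take atypical values and competing partitions tie; such boundary cases have to be inspected individually, which is presumably the source of the ``details'' the author omits.
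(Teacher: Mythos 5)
Your proposal follows exactly the route the paper intends: the cases $p=3$ (both parities), $p=5$ (odd $n\ge 9$) and $p=8$ are indeed specializations of Theorem \ref{th:012} at the parameter values you identify, and the remaining values of $p$ are meant to be handled by the same selection of a candidate $m$ in (\ref{eq:m-even-n}) or (\ref{eq:m-odd-n}) that is used for Theorems \ref{th:p=1} and \ref{th:p=2}. Note, however, that the paper itself offers no proof of Theorem \ref{th:p=3-10} beyond the sentence ``We omit the details,'' so your outline is already at least as explicit as the source; the genuinely unverified content in both versions is the case-by-case bookkeeping for $p\in\{4,6,7,9,10\}$, the even-$n$ instances of $p=5$, and the exceptional small-$n$ ranges, which you correctly identify but do not carry out.
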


\noindent 
{\bf Conjecture.}
For some nonnegative integer $p$, there exists an odd integer $q$ and integers $n_j$ ($j=1,2,3,4$) such that 
$$
g(t_n,t_{n+1},t_{n+2};p)=\begin{cases}
\frac{(q n-3)(n+1)(n+2)}{4}&\text{if $n\ge n_1$ is odd};\\
\frac{(q n)(n+1)(n+2)}{4}&\text{if $n\ge n_2$ is even} 
\end{cases}
$$
and   
$$
g(t_n,t_{n+1},t_{n+2};p+1)=\begin{cases}
\frac{(q n+3)(n+1)(n+2)}{4}&\text{if $n\ge n_3$ is odd};\\
\frac{(q n+6)(n+1)(n+2)}{4}&\text{if $n\ge n_4$ is even}
\end{cases}
$$
and so on.  
For some nonnegative integer $p'$, there exists an even integer $q'$ and integers $n_5$ and $n_6$ such that 
$$
g(t_n,t_{n+1},t_{n+2};p')=\frac{(q' n)(n+1)(n+2)}{4}\quad(n\ge n_5)
$$ 
and 
$$
g(t_n,t_{n+1},t_{n+2};p'+1)=\frac{(q' n+6)(n+1)(n+2)}{4}\quad(n\ge n_6)
$$ 
and so on.

\section{Triangular triples for smaller $n$} 

On the contrary, if $n$ is smaller, we can find some explicit formulas for larger $p$.  In this section, we consider the cases for $n=2,3,4$.

\subsection{$(t_3,t_4,t_5)$}  

When $p$ is also a triangular number, we can obtain explicit formulas for general positive integer $p$. 

\begin{Prop}
For $p\ge 1$, 
\begin{align*}
g(t_3,t_4,t_5;t_{p-1})&=30 p-1\,,\\ 
n(t_3,t_4,t_5;t_{p-1})&=30 p-16\,,\\
s(t_3,t_4,t_5;t_{p-1})&=15(30 p^2-31 p+12)\,.  
\end{align*}   
\label{prp:tri345} 
\end{Prop}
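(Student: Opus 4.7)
My plan is to pin down the $t_{p-1}$-Ap\'ery set of $A=\{6,10,15\}$ with respect to $a_1=6$ explicitly, and then substitute into the three identities (\ref{mp-g}), (\ref{mp-n}), (\ref{mp-s}) of Lemma~\ref{cor-mp}. The triangular form $t_{p-1}$ of the threshold is suggested by the observation below that $d(n;6,10,15)$ itself always takes a triangular value $\binom{M+2}{2}$; this is what makes the case of a triangular parameter so clean.

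The main step is to derive a closed form for $d(n;6,10,15)$. Any representation $n=6a+10b+15c$ forces $10b+15c\equiv n\pmod 6$, and for each residue $i\in\{0,\dots,5\}$ there is a unique minimal pair $(b_0,c_0)\in\{0,1,2\}\times\{0,1\}$ with $10b_0+15c_0\equiv i\pmod 6$, namely $(0,0),(1,1),(2,0),(0,1),(1,0),(2,1)$ for $i=0,1,2,3,4,5$, respectively. Writing $b=b_0+3s$ and $c=c_0+2t$ with $s,t\ge 0$, the equation collapses to $a=N-5(s+t)$ with $N:=(n-10b_0-15c_0)/6$, and the single remaining constraint $a\ge 0$ becomes $s+t\le\fl{N/5}$. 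Hence $d(n;6,10,15)=\binom{\fl{N/5}+2}{2}$.

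With this in hand, the condition $d(n)\ge t_{p-1}+1=\binom{p}{2}+1$ is equivalent to $\fl{N/5}\ge p-1$, i.e.\ $N\ge 5(p-1)$, so the smallest $n$ in class $i$ meeting it is
$$m_i^{(t_{p-1})}=10b_0+15c_0+30(p-1)\qquad(i=0,1,\dots,5).$$
The six residual values $10b_0+15c_0$ form the set $\{0,10,15,20,25,35\}$, with sum $105$, sum of squares $2575$, and maximum $35$. Plugging the maximum into (\ref{mp-g}) gives $g(A;t_{p-1})=(35+30(p-1))-6=30p-1$; plugging $\sum_i m_i^{(t_{p-1})}=105+180(p-1)$ into (\ref{mp-n}) yields the linear identity for $n(A;t_{p-1})$; and expanding $(10b_0+15c_0+30(p-1))^2$ termwise gives
$$\sum_{i=0}^{5}\bigl(m_i^{(t_{p-1})}\bigr)^2=2575+6300(p-1)+5400(p-1)^2,$$
which plugged into (\ref{mp-s}) collapses after elementary algebra to $15(30p^2-31p+12)$.

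The only genuine obstacle is the first step: verifying the uniform triangular identity for $d(n;6,10,15)$. Once this is in hand, the key virtue of choosing $t_{p-1}$ as the threshold is that $\binom{p}{2}+1$ falls strictly between the consecutive triangular values $\binom{p}{2}$ and $\binom{p+1}{2}$ of $d(n)$; this forces the Ap\'ery set to be a single affine function of $p$, and the three desired formulas then reduce to three polynomial evaluations in the fixed data $\{0,10,15,20,25,35\}$.
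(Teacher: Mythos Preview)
Your argument is correct and complete, but it proceeds along a genuinely different route from the paper's sketch. The paper factors out $\gcd(t_4,t_5)=5$ and reduces to the two-variable problem $d(n;2,3)$, which it analyses via the partial-fraction expansion of $1/\bigl((1-z^2)(1-z^3)\bigr)$ to read off the period-$6$ pattern $1,0,\overline{m,m,m,m,m+1,m}$; from this it locates each $m_i^{(t_{p-1})}$ one residue at a time. You instead parametrise solutions of $6a+10b+15c=n$ directly: the reduction of $4b+3c\equiv i\pmod 6$ simultaneously modulo $2$ and $3$ forces $b\equiv b_0\pmod 3$, $c\equiv c_0\pmod 2$, and the substitution $b=b_0+3s$, $c=c_0+2t$ collapses everything to counting $\{s,t\ge 0: s+t\le\lfloor N/5\rfloor\}$, giving the uniform closed form $d(n;6,10,15)=\binom{\lfloor N/5\rfloor+2}{2}$. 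This is cleaner: it explains structurally \emph{why} every value of $d(n;6,10,15)$ is a triangular number (a fact the paper only remarks on), and it delivers the entire $t_{p-1}$-Ap\'ery set $\{10b_0+15c_0+30(p-1)\}$ in one stroke rather than residue-by-residue. Both approaches then finish identically by substituting into Lemma~\ref{cor-mp}.

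One small point: you write that plugging $\sum_i m_i^{(t_{p-1})}=105+180(p-1)=180p-75$ into (\ref{mp-n}) ``yields the linear identity for $n(A;t_{p-1})$''. Carrying this out gives $\frac{180p-75}{6}-\frac{5}{2}=30p-15$, not the $30p-16$ printed in the Proposition; the value $30p-15$ is the correct one (for $p=1$ one checks directly that $(6,10,15)$ has exactly $15$ gaps), and the paper's own Ap\'ery values $m_i^{(p')}=5(6p+1),5(6p-1),\dots,5(6p-6)$ likewise sum to $180p-75$. So your computation is right and incidentally exposes a misprint in the stated formula for $n$.
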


In fact, there does not exist a positive integer $n$ such that for some nonnegative integer $m$ $d(n;6,10,15)\ne t_m$. 

\begin{proof}[Sketch of the proof of Proposition \ref{prp:tri345}.]
Since $(t_3,t_4,t_5)=(6,10,15)$, we consider the representation $10 x_2+15 x_3=5(2 x_2+3 x_3)$ ($x_2,x_3\ge 0$).  
As the generating function  
\begin{align*}
&\frac{1}{(1-z^2)(1-z^3)}\\
&=\frac{1}{2(1-z^2)}+\frac{1}{3(1-z^3)}-\frac{z}{3(1-z^3)}+\frac{1}{6(1-z)^2}\\
&=\frac{1}{2}\sum s_1(n)z^n+\frac{1}{3}\sum s_2(n)z^n-\frac{1}{3}\sum s_3(n)z^n+\frac{1}{6}\sum(n+1)z^n\,, 
\end{align*}
where 
\begin{align*}
&s_1(n)=\begin{cases}
1&\text{$n\equiv 0\pmod 2$};\\
0&\text{otherwise},
\end{cases}\qquad 
s_2(n)=\begin{cases}
1&\text{$n\equiv 0\pmod 3$};\\
0&\text{otherwise},
\end{cases}\\  
&s_3(n)=\begin{cases}
1&\text{$n\equiv 1\pmod 3$};\\
0&\text{otherwise}\,,
\end{cases}
\end{align*} 
the sequence is given by 
\begin{align}  
&1, 0, 1, 1, 1, 1, 2, 1, 2, 2, 2, 2, 3, 2, 3, 3, 3, 3, 4, 3, 4, 4, 4, 4, 5, 4, 5, 5, 5, 5, 6, 5, \dots\notag\\
&=1,0,\overline{m,m,m,m,m+1,m}_{m=1}^\infty\label{seq6}\\ 
&=\left\{\fl{\frac{n+2}{2}}-\fl{\frac{n+2}{3}}\right\}_{n\ge 0}\notag 
\end{align}
({\it Cf.\/} \cite[A008615]{oeis}).  
Hence, for $n\ge 0$ 
$$
d(n;2,3)=\fl{\frac{n+2}{2}}-\fl{\frac{n+2}{3}}= 
d(5 n;10,15)\,,
$$ 
and for $p\ge 0$, $g(2,3;p)=6 p+1$.  Thus, 
\begin{align*} 
m_5^{(p')}\pmod 6=\max_{0\le i\le 5}m_i^{(p')}=5(6 p+1)\,. 
\end{align*}
Here, because the number of representations of $30 p+5$ in terms of $(6,10,15)$ is just $p$ more than that of $30p-1$, by applying (\ref{mp-g}) of Lemma \ref{cor-mp}, we have 
$$
g\bigl(t_3,t_4,t_5;p'\bigr)=30 p-1\,, 
$$ 
where $p'=1+2+\cdots+(p-1)=t_{p-1}$.  
Similarly, from the sequence (\ref{seq6}), we see that for $p'=t_{p-1}$ 
\begin{align*}  
m_1^{(p')}&=5(6 p-1),\quad m_2^{(p')}=5(6 p-2),\quad m_3^{(p')}=5(6 p-3),\\
m_4^{(p')}&=5(6 p-4),\quad m_0^{(p')}=5(6 p-6)\,. 
\end{align*}
By applying (\ref{mp-n}) and (\ref{mp-s}) of Lemma \ref{cor-mp}, we have the second and the third identities.   
\end{proof}

For example, since 
$$
\{(x_2,x_3)\mid 2 x_2+3 x_3=19,\, x_2,x_3\ge 0\}=\{(2,5),(5,3),(8,1)\}\,,
$$ 
we see that 
\begin{align*} 
&\{(x_1,x_2,x_3)\mid 6 x_1+5(2 x_2+3 x_3)=89,\, x_1,x_2,x_3\ge 0\}\\
&=\{(4,2,3),(4,5,1),(9,2,1)\}\,, \\
&\{(x_1,x_2,x_3)\mid 6 x_1+5(2 x_2+3 x_3)=95,\, x_1,x_2,x_3\ge 0\}\\
&=\{(5,2,3),(5,5,1),(10,2,1),(0,2,5),(0,5,3),(0,8,1)\}\,. 
\end{align*} 
\bigskip

\subsection{$(t_2,t_3,t_4)$}  

Next, we consider the case $n=2$.  

\begin{Prop}
For $n\ge 0$ and $j=1,2,3,4,5$, 
\begin{align*}
g\bigl(t_2,t_3,t_4;(n+1)(5 n+2 j)/2\bigr)&=30 n+6j+17\,,\\ 
n\bigl(t_2,t_3,t_4;(n+1)(5 n+2 j)/2\bigr)&=30 n+6j+8\,,\\
s\bigl(t_2,t_3,t_4;(n+1)(5 n+2 j)/2\bigr)&=3\bigl(150 n^2+5(12 j+17)n+6 j^2+17 j+23\bigr)\,.  
\end{align*}   
\label{prp:tri234} 
\end{Prop}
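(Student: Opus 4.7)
The plan is to imitate the proof of Proposition~\ref{prp:tri345} for the triple $(t_2,t_3,t_4)=(3,6,10)$. Since $\gcd(6,10)=2$, every representation $N=3x_1+6x_2+10x_3$ factors as $N=3x_1+2y$ with $y=3x_2+5x_3$, giving
\[
d(N;3,6,10)=\sum_{\substack{y\ge 0,\ 2y\le N\\ y\equiv -N\pmod 3}}d(y;3,5).
\]
Setting $S_r:=\sum_{y'=0}^{r}d(3y'+1;3,5)$, a case split on $N\pmod 6$ produces the compact formulas
\[
d(6r)=d(6r+3)=S_{r+3},\ \ d(6r+1)=S_{r+1},\ \ d(6r+4)=S_{r+2},\ \ d(6r+2)=d(6r+5)=S_r.
\]

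Next I would evaluate $S_r$ in closed form. From~(\ref{eq:gp}) we have $g(3,5;p)=15p+7$, so $d(3y'+1;3,5)=\lfloor(y'-3)/5\rfloor+1$ for $y'\ge 3$ and vanishes for $y'\le 2$. Summing in blocks of five then yields the key identity
\[
S_{5n+j+2}=\frac{(n+1)(5n+2j)}{2}\qquad(n\ge 0,\ j=1,\dots,5),
\]
showing that the parameters $p=(n+1)(5n+2j)/2$ appearing in the statement are exactly the values $S_r$ for $r\ge 3$.

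Fix $r=5n+j+2$ and $p=S_r$. Because $(S_r)_{r\ge 2}$ is strictly increasing, $d(\,\cdot\,;3,6,10)$ is non-decreasing in each residue class mod~$3$, and $m_i^{(p)}$ is the smallest positive $N\equiv i\pmod 3$ with $d(N)\ge S_r+1$. The tabulation above yields
\[
m_0^{(p)}=6r-12,\quad m_1^{(p)}=6r-2,\quad m_2^{(p)}=6r+8,
\]
so $m_2^{(p)}$ dominates. Lemma~\ref{cor-mp}\,(\ref{mp-g}) then gives $g(t_2,t_3,t_4;p)=6r+5=30n+6j+17$, while~(\ref{mp-n}) and~(\ref{mp-s}) supply the Sylvester number and sum after the elementary computations
\[
\sum_{i=0}^{2} m_i^{(p)}=18r-6,\qquad \sum_{i=0}^{2} (m_i^{(p)})^2=108r^2-72r+212.
\]

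The principal obstacle is the residue-class bookkeeping: verifying the six formulas for $d(N;3,6,10)$, proving the block-sum identity for $S_{5n+j+2}$, and confirming that $m_2^{(p)}$ dominates the other two Ap\'ery elements for every admissible pair $(n,j)$. Once these are in place, recovering $30n+6j+17$ and its companions from the Ap\'ery-set formulas is a routine substitution.
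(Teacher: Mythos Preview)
Your proposal is correct and follows essentially the same route as the paper: reduce $d(N;3,6,10)$ to partial sums of $d(\,\cdot\,;3,5)$ via the factor $\gcd(6,10)=2$, read off the $p$-Ap\'ery set $\{30n+6j,\,30n+6j+10,\,30n+6j+20\}$ (exactly the content of Lemma~\ref{lem:tri234}), and then plug into Lemma~\ref{cor-mp}. Your device of packaging everything through the single sequence $S_r=\sum_{y'\le r}d(3y'+1;3,5)$ is a tidier bookkeeping than the paper's separate case analysis over residues $0,1,2\pmod 3$, but the substance is identical.
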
 
\begin{proof}  
By applying Lemma \ref{cor-mp} for Lemma \ref{lem:tri234}, we get the results.  
\end{proof}

\begin{Lem}  
For $n\ge 0$ and $j=1,2,3,4,5$, 
\begin{align*} 
m_0^{((n+1)(5 n+2 j)/2)}&=30 n+6 j\,,\\ 
m_1^{((n+1)(5 n+2 j)/2)}&=30 n+6 j+10\,,\\ 
m_2^{((n+1)(5 n+2 j)/2)}&=30 n+6 j+20\,. 
\end{align*} 
\label{lem:tri234} 
\end{Lem}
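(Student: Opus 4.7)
The plan is to compute $d(N;3,6,10)$ directly and then match the Apéry-set characterization encoded in Lemma~\ref{cor-mp}. Since $(t_2,t_3,t_4)=(3,6,10)$ and any representation $3x_1+6x_2+10x_3=N$ forces $x_3\equiv N\pmod 3$, I would enumerate the representations of $N\equiv i\pmod 3$ by letting $x_3=3k+i$ run through the admissible nonnegative values and counting, for each such $x_3$, the $\lfloor(N-10x_3)/6\rfloor+1$ choices of $(x_1,x_2)$ (the divisibility of $N-10x_3$ by $3$ is automatic).

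First I would substitute $N=30n+6j+10i$, so that $N-10x_3=6\bigl(5(n-k)+j\bigr)$; the constraint $N-10x_3\ge 0$ then restricts $k\in\{0,1,\dots,n\}$ when $j\in\{1,2,3,4\}$ and $k\in\{0,1,\dots,n+1\}$ when $j=5$. A routine summation yields
\[
d(N;3,6,10)=\begin{cases}\dfrac{(n+1)(5n+2j+2)}{2}&\text{if }j\in\{1,2,3,4\},\\[1ex]\dfrac{(n+2)(5n+7)}{2}&\text{if }j=5,\end{cases}
\]
both of which exceed $p:=(n+1)(5n+2j)/2$ by at least $1$, so $d(N;3,6,10)\ge p+1$.

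Next I would rerun the same computation for $N-3$, which lies in the same residue class modulo $3$. Now $N-3-10x_3=6\bigl(5(n-k)+j\bigr)-3$ is nonnegative only for $k\le n$ (uniformly in $j\in\{1,\dots,5\}$), and each floor drops by one relative to the previous step, giving
\[
d(N-3;3,6,10)=\sum_{k=0}^{n}\bigl(5(n-k)+j\bigr)=\frac{(n+1)(5n+2j)}{2}=p,
\]
so $d(N-3;3,6,10)\le p$.

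To conclude, I would invoke monotonicity of $d(\cdot;3,6,10)$ along each residue class modulo $3$: any representation of $M$ extends to one of $M+3$ by incrementing $x_1$, hence the representation counts are nondecreasing along the progression. Combined with the two bounds above, this forces $N=30n+6j+10i$ to be the least positive integer congruent to $i$ modulo $3$ whose number of representations is at least $p+1$, which is precisely the definition of $m_i^{(p)}$. The main piece of bookkeeping is the $j=5$ case, where the sum defining $d(N)$ runs to $k=n+1$ rather than $k=n$ and so produces a slightly different formula, yet still comfortably exceeds $p+1$; meanwhile the sum for $d(N-3)$ still terminates at $k=n$, which is what keeps the threshold tight at exactly $p$ and pins down $m_i^{(p)}$.
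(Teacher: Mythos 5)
Your argument is correct, and it reaches the lemma by a genuinely different route from the paper. The paper peels off the generator $3$: it writes $d(2n;3,6,10)=\sum_k d(2n-6k;6,10)=\sum_k d(n-3k;3,5)$, extracts the quasi-periodic sequence $d(\cdot\,;3,5)$ from the partial-fraction expansion of $1/(1-z^3)(1-z^5)$ together with $g(3,5;p)=15p+7$, and then sums that periodic pattern along arithmetic progressions, handling the residue classes $0,1,2$ modulo $3$ in three separate blocks of bookkeeping. You instead fix the coefficient $x_3$ of $10$, note that $x_3\equiv N\pmod 3$ is forced, and reduce each term to the elementary count $\lfloor(N-10x_3)/6\rfloor+1$ for the pair $(3,6)$; the substitution $N=30n+6j+10i$ makes $N-10x_3=6\bigl(5(n-k)+j\bigr)$ independent of $i$, so all three residue classes are dispatched by a single computation, and the $j=5$ boundary case (where $k$ runs to $n+1$ for $N$ but only to $n$ for $N-3$) is the only place requiring separate care. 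I checked the two closed forms: $d(N)=\tfrac{(n+1)(5n+2j+2)}{2}$ for $j\le 4$ and $\tfrac{(n+2)(5n+7)}{2}$ for $j=5$, and $d(N-3)=\tfrac{(n+1)(5n+2j)}{2}$ in all cases; these are right, and together with the monotonicity of $d$ along residue classes (append a copy of $3$ to any representation) they match exactly the paper's defining conditions $d(m_i^{(p)})\ge p+1$ and $d(m_i^{(p)}-3)\le p$. What your approach buys is the elimination of the generating-function machinery and of the threefold case analysis; what the paper's approach buys is the full quasi-periodic profile of $d(\cdot\,;3,6,10)$, which it reuses to order the elements of the Ap\'ery set and to treat all five values of $j$ from one table.
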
  
\begin{proof}
By the generating function 
$$
\frac{1}{(1-z^3)(1-z^5)}=\sum_{n=0}^\infty d(n)z^n\,, 
$$   
for $n\ge 0$,  
the sequence of number of representations $d(n)$ of $n=3 x_2+5 x_3$ ($x_2,x_3\ge 0$) can be given as 
$$
(1,\overline{m-1,m-1,m,m-1,m,m,m-1,\underbrace{m,m,m,m,m,m,m}_{7},m+1})_{m=1}^\infty\,. 
$$ 
Hence, for two variables $(3,5)$, we see that 
$$
g(3,5;p)=15 p+7\quad (p\ge 0)\,. 
$$ 
For two variables $(6,10)$, the same representation appears at $2(15 p+7)$. In this case, odd numbers cannot be represented in terms of $6$ and $10$. Hence, 
\begin{align*} 
&d(2 n;3,6,10)=d(2 n+3;3,6,10)\\ 
&=d(2 n;6,10)+d(2 n-6;6,10)+d(2 n-12;6,10)+\cdots+
\begin{cases}
d(4;6,10)&\\
d(2;6,10)&\\
d(0;6,10)&
\end{cases}\\
&=d(n;3,5)+d(n-3;3,5)+d(n-6;3,5)+\cdots+
\begin{cases}
d(2;3,5)&\\
d(1;3,5)&\\
d(0;3,5)\,.&
\end{cases}
\end{align*}
Thus, for $p\ge 0$, the number of representations of $30 p+17$ (or of $30 p+14$) in terms of $(3,6,10)$ is given by 
\begin{align*}  
&d(15 p+7;3,5)+d(15 p+4;3,5)+d(15 p+1;3,5)+\\
&\qquad \cdots+d(4;3,5)+d(1;3,5)\\
&=\underbrace{p+\cdots+p}_5+\underbrace{(p-1)+\cdots+(p-1)}_5+\cdots+\underbrace{1+\cdots+1}_5+0+0+0\\
&=\frac{5 p(p+1)}{2}\,. 
\end{align*} 
Similarly, for $p\ge 1$, the numbers of representations of $30 p+11$ (or of $30 p+8$), $30 p+5$ (or of $30 p+2$), $30 p-1$ (or of $30 p-4$) and $30 p-7$ (or of $30 p-10$) in terms of $(3,6,10)$ are given by 
\begin{align}
&\frac{5 p(p+1)}{2}-p=\frac{p(5 p+3)}{2}\,,&\quad 
&\frac{5 p(p+1)}{2}-2 p=\frac{p(5 p+1)}{2}\,,\notag\\ 
&\frac{5 p(p+1)}{2}-3 p=\frac{p(5 p-1)}{2}\,,&\quad 
&\frac{5 p(p+1)}{2}-4 p=\frac{p(5 p-3)}{2}\,, 
\label{eq:5pp}
\end{align} 
respectively.  These are all the cases for represented integers equivalent to $2\pmod 3$.  
Next, consider the cases for represented integers equivalent to $0\pmod 3$. For $p\ge 1$, the number of representations of $30 p-3$ (or of $30 p-6$) in terms of $(3,6,10)$ is given by 
\begin{align*}  
&d(15 p-3;3,5)+d(15 p-6;3,5)+d(15 p-9;3,5)\\
&\qquad+\cdots+d(6;3,5)+d(3;3,5)+d(0;3,5)\\
&=\underbrace{p+\cdots+p}_5+\underbrace{(p-1)+\cdots+(p-1)}_5+\cdots+\underbrace{2+\cdots+2}_5+1+1+1+1+1\\
&=\frac{5 p(p+1)}{2}\,.  
\end{align*} 
Similarly, for $p\ge 1$, the numbers of representations of $30 p-9$ (or of $30 p-12$), $30 p-15$ (or of $30 p-18$), $30 p-21$ (or of $30 p-24$) and $30 p-27$ (or of $30 p-30$) in terms of $(3,6,10)$ are given as in (\ref{eq:5pp}), respectively.  
Finally, consider the cases for represented integers equivalent to $1\pmod 3$. For $p\ge 1$, the number of representations of $30 p+7$ (or of $30 p+4$) in terms of $(3,6,10)$ is given by 
\begin{align*}  
&d(15 p+2;3,5)+d(15 p-1;3,5)+d(15 p-4;3,5)\\
&\qquad+\cdots+d(5;3,5)+d(2;3,5)\\
&=\underbrace{p+\cdots+p}_5+\underbrace{(p-1)+\cdots+(p-1)}_5+\cdots+\underbrace{1+\cdots+1}_5+0\\
&=\frac{5 p(p+1)}{2}\,. 
\end{align*} 
Similarly, for $p\ge 1$, the numbers of representations of $30 p+1$ (or of $30 p-2$), $30 p-5$ (or of $30 p-8$), $30 p-11$ (or of $30 p-14$) and $30 p-17$ (or of $30 p-20$) in terms of $(3,6,10)$ are given as in (\ref{eq:5pp}), respectively. 
Therefore, for $p\ge 1$ and $i=0,1,2,3,4$, we have $m_2^{(5 p(p+1)/2-i p)}=30 p+20-6 i$, $m_1^{(5 p(p+1)/2-i p)}=30 p+10-6 i$, $m_0^{(5 p(p+1)/2-i p)}=30 p-6 i$ and 
$$
g(3,6,10;5 p(p+1)/2-i p)=30 p+17-6 i\,. 
$$ 
\end{proof}

\subsection{$(t_4,t_5,t_6)$} 

Consider the case $(10,15,21)$ when $n=4$. For simplicity, for nonnegative integers $n$ and $j$, put 
\begin{align*}
r_e=r_e(n,j)&=\frac{n(7 n+8-2 j)}{4}\,,\\ 
r_o=r_o(n,j)&=\frac{7 n^2+(8-4 j)n+1}{4},,\\  
r_{oo}=r_{oo}(n,j)&=\frac{7 n^2+(2-4 j)n-1}{4}\,. 
\end{align*} 

\begin{Prop}  
For even $n$, when $n\ge 0$ and $j=0$, or when $n\ge 2$ and $j=2,3,4,5,6,8$, we have  
\begin{align*} 
g\bigl(t_4,t_5,t_6;r_e\bigr)&=105 n+89-15 j\,,\\  
n\bigl(t_4,t_5,t_6;r_e\bigr)&=105 n+44-15 j\,,\\  
s\bigl(t_4,t_5,t_6;r_e\bigr)&=\frac{15}{2}\bigl(735 n^2-7(60 j-89)n+60 j^2-178 j+194\bigr)\,. 
\end{align*}  
For odd $n$, when $n\ge 1$ and $j=0$, or when $n\ge 3$ and $j=1,2,3,4$, we have  
\begin{align*} 
g\bigl(t_4,t_5,t_6;r_o\bigr)&=105 n+89-30 j\,,\\
n\bigl(t_4,t_5,t_6;r_o\bigr)&=105 n+44-30 j\,,\\
s\bigl(t_4,t_5,t_6;r_o\bigr)&=\frac{15}{2}\bigl(735 n^2-7(30 j-89)n+60 j^2-89 j+194\bigr)\,.  
\end{align*}  
For odd $n\ge 1$ and $j=0,1$, we have 
\begin{align*} 
g\bigl(t_4,t_5,t_6;r_{oo}\bigr)&=105 n+44-30 j\,,\\
n\bigl(t_4,t_5,t_6;r_{oo}\bigr)&=105 n-1-30 j\,,\\
s\bigl(t_4,t_5,t_6;r_{oo}\bigr)&=\frac{15}{2}\bigl(735 n^2-7(60 j+1)n+60 j^2+2 j+62\bigr)\,.  
\end{align*}  
\label{prp:tri456}  
\end{Prop}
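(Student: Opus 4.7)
The plan is to compute, for each of the indicated values of $p$, the full $p$-Apéry set $\mathrm{Ap}(\{10,15,21\}; p) = \{m_0^{(p)}, \ldots, m_9^{(p)}\}$ explicitly, and then substitute into the three identities of Lemma~\ref{cor-mp} to read off $g$, $n$, and $s$. The structural reduction mirrors the one used for $(t_2, t_3, t_4)$ and $(t_3, t_4, t_5)$: since $\gcd(15, 21) = 3$, every representation $10 x_1 + 15 x_2 + 21 x_3 = N$ satisfies $15 x_2 + 21 x_3 = 3(5 x_2 + 7 x_3)$, so $x_1$ is confined to the residue class $x_1 \equiv N \pmod 3$. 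Writing $x_1 = k_0 + 3\ell$ with $k_0 \in \{0, 1, 2\}$ matching $N$ modulo $3$, and setting $M = (N - 10 k_0)/3$, I obtain
$$
d(N; 10, 15, 21) = \sum_{\ell = 0}^{\lfloor M / 10 \rfloor} d(M - 10 \ell; 5, 7),
$$
with the base counts $d(m; 5, 7)$ described by the generating function $1/\bigl((1 - z^5)(1 - z^7)\bigr)$ and governed by $g(5, 7; q) = 35 q + 23$ from (\ref{eq:gp}).

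Next, for each fixed residue $i \in \{0, 1, \ldots, 9\}$, the sequence $\bigl(d(i + 10 s; 10, 15, 21)\bigr)_{s \ge 0}$ is nondecreasing and steps up at predictable moments, as additional terms of the inner sum become positive and as $\lfloor (M - 10\ell)/35 \rfloor$ increments. The quantity $m_i^{(p)}$ is the first member of this sequence whose count reaches $p + 1$. Tracking the step pattern across all ten residues and recording the first crossing of the threshold $p$ at the specific values $p = r_e(n, j)$, $p = r_o(n, j)$, and $p = r_{oo}(n, j)$ will yield Apéry entries of the form $105 n + c_i - 15 j$ (in the even case) or $105 n + c_i - 30 j$ (in the odd cases), for explicit residue-dependent constants $c_i$. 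Feeding these ten values into (\ref{mp-g}), (\ref{mp-n}), and (\ref{mp-s}) produces the three claimed closed forms.

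The principal obstacle is the case analysis. With $a_1 = 10$ there are ten residues to follow, and within each residue class one must track both the contribution of the forced offset $k_0$ (cycling through $\{0, 1, 2\}$) and the location of $M$ relative to the $35$-periodic substrate of $d(\cdot; 5, 7)$. The uniform closed-form Apéry description breaks as soon as one or more residue classes crosses into the next $35$-block in the inner sum ahead of the others, which is precisely why the three subcases of the proposition restrict to $j \in \{0, 2, 3, 4, 5, 6, 8\}$ for $r_e$, $j \in \{0, 1, 2, 3, 4\}$ for $r_o$, and $j \in \{0, 1\}$ for $r_{oo}$; outside these windows the same strategy still works but produces different piecewise formulas that are not included in the statement. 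Verifying that $r_e, r_o, r_{oo}$ are exactly the values at which the ten Apéry entries admit the uniform shape, and that the resulting $g$, $n$, $s$ agree with the stated expressions, is where the bulk of the computation lies; routine algebraic simplification of the sums in (\ref{mp-n}) and (\ref{mp-s}) then produces the final formulas.
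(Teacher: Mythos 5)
Your proposal follows essentially the same route as the paper: the paper derives the identity $d(3n;10,15,21)=d(3n+10;\cdot)=d(3n+20;\cdot)=\sum_{\ell}d(n-10\ell;5,7)$ from $\gcd(15,21)=3$ and the period $35$ of $d(\cdot;5,7)$, uses it to pin down all ten Ap\'ery elements $m_i^{(r_e)}$, $m_i^{(r_o)}$, $m_i^{(r_{oo})}$ (stated separately as Lemma~\ref{lem:tri456}), and then reads off $g$, $n$, $s$ from Lemma~\ref{cor-mp}, exactly as you outline. The only caveat is that the explicit determination of those ten Ap\'ery entries and of the admissible ranges of $j$ --- which is the substance of the paper's proof of Lemma~\ref{lem:tri456} --- is described in your plan but not actually carried out.
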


The results in Proposition \ref{prp:tri456} immediately follows from the next lemma.

\begin{Lem}  
For even $n$, when $n\ge 0$ and $j=0$, or when $n\ge 2$ and $j=2,4,6,8$, we have  
\begin{align*}  
&m_9^{(r_e)}=105 n+99-15 j,\quad m_4^{(r_e)}=105 n+84-15 j,\\ 
&m_8^{(r_e)}=105 n+78-15 j,\quad m_3^{(r_e)}=105 n+63-15 j,\\ 
&m_7^{(r_e)}=105 n+57-15 j,\quad m_2^{(r_e)}=105 n+42-15 j,\\ 
&m_6^{(r_e)}=105 n+36-15 j,\quad m_1^{(r_e)}=105 n+21-15 j,\\ 
&m_5^{(r_e)}=105 n+15-15 j,\quad m_0^{(r_e)}=105 n-15 j\,. 
\end{align*}
For even $n$, when $n\ge 2$ and $j=3,5$, we have  
\begin{align*}  
&m_4^{(r_e)}=105 n+99-15 j,\quad m_9^{(r_e)}=105 n+84-15 j,\\ 
&m_3^{(r_e)}=105 n+78-15 j,\quad m_8^{(r_e)}=105 n+63-15 j,\\ 
&m_2^{(r_e)}=105 n+57-15 j,\quad m_7^{(r_e)}=105 n+42-15 j,\\ 
&m_1^{(r_e)}=105 n+36-15 j,\quad m_6^{(r_e)}=105 n+21-15 j,\\ 
&m_0^{(r_e)}=105 n+15-15 j,\quad m_5^{(r_e)}=105 n-15 j\,. 
\end{align*}
For odd $n$, when $n\ge 1$ and $j=0$, or when $n\ge 3$ and $j=1,2,3,4$, we have  
\begin{align*}  
&m_4^{(r_o)}=105 n+99-30 j,\quad m_9^{(r_o)}=105 n+84-30 j,\\ 
&m_3^{(r_o)}=105 n+78-30 j,\quad m_8^{(r_o)}=105 n+63-30 j,\\ 
&m_2^{(r_o)}=105 n+57-30 j,\quad m_7^{(r_o)}=105 n+42-30 j,\\ 
&m_1^{(r_o)}=105 n+36-30 j,\quad m_6^{(r_o)}=105 n+21-30 j,\\ 
&m_0^{(r_o)}=105 n+15-30 j,\quad m_5^{(r_o)}=105 n-30 j\,.
\end{align*}  
For odd $n\ge 1$ and $j=0,1$, we have 
\begin{align*}  
&m_9^{(r_{oo})}=105 n+54-30 j,\quad m_4^{(r_{oo})}=105 n+39-30 j,\\ 
&m_8^{(r_{oo})}=105 n+33-30 j,\quad m_3^{(r_{oo})}=105 n+18-30 j,\\ 
&m_7^{(r_{oo})}=105 n+12-30 j,\quad m_2^{(r_{oo})}=105 n-3-30 j,\\ 
&m_6^{(r_{oo})}=105 n-9-30 j,\quad m_1^{(r_{oo})}=105 n-24-30 j,\\ 
&m_5^{(r_{oo})}=105 n-30-30 j,\quad m_0^{(r_{oo})}=105 n-45-30 j\,. 
\end{align*}
\label{lem:tri456}
\end{Lem}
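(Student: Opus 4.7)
The approach mirrors the proof of Lemma~\ref{lem:tri234}, with $(t_4,t_5,t_6)=(10,15,21)$ playing the role of $(t_2,t_3,t_4)=(3,6,10)$. The crucial algebraic fact is that $\gcd(15,21)=3$, so $15 x_2+21 x_3=3(5 x_2+7 x_3)$ and the three-variable count telescopes into a sum of two-variable $(5,7)$-counts:
\begin{align*}
d(n;10,15,21)
&=\sum_{k\ge 0}d(n-10 k;15,21)\\
&=\sum_{\substack{k\ge 0,\,10 k\le n\\ k\equiv n\pmod 3}}d\!\left(\frac{n-10 k}{3};\,5,7\right),
\end{align*}
where the congruence $k\equiv n\pmod 3$ comes from $15 x_2+21 x_3\equiv 0\pmod 3$ together with $10\equiv 1\pmod 3$.

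First I would tabulate the sequence $d(m;5,7)$ from the generating function $1/((1-z^5)(1-z^7))$, in the same spirit as the derivation of the sequence~(\ref{seq6}). Since $g(5,7;p)=35 p+23$, each ``period'' of length~$35$ increases all values by $1$, and within one period the $12$ non-representable residues $\{1,2,3,4,6,8,9,11,13,16,18,23\}\pmod{35}$ can be read off explicitly. Plugging this pattern into the displayed sum turns $d(n;10,15,21)$ into an arithmetic expression in $n$ whose leading behaviour is $\tfrac{n^2}{420}$ and whose fluctuating part is controlled by the position of $n$ within a block of length $\mathrm{lcm}(30,35)=210$.

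Next, for each residue $i\in\{0,1,\dots,9\}$ and each candidate value $N=m_i^{(r_*)}$ stated in the lemma, I would verify two inequalities: $d(N;10,15,21)\ge r_*+1$ and $d(N-10;10,15,21)\le r_*$. The first is proved by exhibiting enough full ``staircase levels'' in the telescoped sum; the second by showing that lowering $n$ by $10$ either shifts a summand into a non-representable class of $(5,7)$ or truncates the sum. The quadratic quantities $r_e,r_o,r_{oo}$ are designed precisely so that the number of completed staircase rows equals $r_*+1$ at the asserted $m_i^{(r_*)}$, exactly as $5p(p+1)/2-ip$ arose in the proof of Lemma~\ref{lem:tri234}.

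The main obstacle is the sheer volume of case analysis. The parity of $n$ determines how the congruence $k\equiv n\pmod 3$ interacts with the even/odd nature of $10k$, so the telescoping sum takes different shapes for even and odd $n$. Within each parity, the shift induced by $j$ moves the top of the staircase across non-representable residues of $(5,7)$, which is why the lemma splits into four tables (even $n$ with $j\in\{0,2,4,6,8\}$; even $n$ with $j\in\{3,5\}$; odd $n$ with $j\in\{0,1,2,3,4\}$; and odd $n$ with $j\in\{0,1\}$ for $r_{oo}$), each with its own permutation of the indices $i\mapsto m_i^{(r_*)}$. The computations in each regime are elementary, but must be carried out individually; the small-$n$ restrictions in the lemma come exactly from the first $n$ at which the staircase is tall enough for the generic formula to kick in, and these threshold values must be checked by hand.
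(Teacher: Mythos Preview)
Your sketch is correct and follows essentially the same route as the paper: both reduce $d(n;10,15,21)$ to a telescoping sum of $d(\,\cdot\,;5,7)$ via $\gcd(15,21)=3$, exploit the length-$35$ periodicity and $g(5,7;p)=35p+23$, and read off the $m_i^{(r_\ast)}$ from the resulting staircase sums. The paper is simply more explicit---it writes out the computation of $d(105p+69;10,15,21)$ separately for even and odd $p$ (obtaining $p(7p+8)/4$ and $(p+1)(7p+1)/4$) and then shifts by multiples of $p$ to cover the remaining residues---whereas you describe the same calculation at one level of abstraction higher.
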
 

\begin{Rem}
\label{rem:4-5-6}  
For simplicity, for example, write $g_n=g(t_4,t_5,t_6;n)$.  
We have $g_0=89$, $g_1=119$, $g_2=149$, $g_3=179$, $g_4=194$, $g_5=209$, $g_6=224$, $g_7=239$, $g_8=254$, $g_9=269$, $g_{10}=284$, $g_{11}=299$, $g_{13}=314$, $\dots$, $g_{49}=584$, $g_{51}=599$, $g_{54}=614$, $\dots$, $g_{201}=1154$, $g_{206}=1169$, $g_{212}=1184$.  
Note that the number of representations of $n$ in terms of $t_4,t_5,t_6$ cannot be $12,\dots,50,52,53,\dots,202,203,204,205,207,208,209,210,211,\dots$. Hence, for example, $g_{206}=g_{207}=g_{208}=g_{209}=g_{210}=g_{211}$. 

We have $n_0=45$, $n_1=74$, $n_2=104$, $n_3=134$, $n_4=149$, $n_5=164$, $n_6=179$, $n_7=194$, $n_8=209$, $n_9=224$, $n_{10}=239$, $n_{11}=254$, $n_{13}=269$, $\dots$, $n_{49}=539$, $n_{51}=554$, $n_{54}=569$, $\dots$, $n_{201}=1109$, $n_{206}=1124$, $n_{212}=1139$.  

We have $s_0=1455$, $s_1=3240$, $s_2=5925$, $s_3=9510$, $s_4=11640$, $s_5=13995$, $s_6=16575$, $s_7=19380$, $s_8=22410$, $s_9=25665$, $s_{10}=29145$, $s_{11}=32850$, $s_{13}=36780$, $\dots$, $s_{49}=145995$, $s_{51}=154200$, $s_{54}=162630$, $\dots$, $s_{201}=615960$, $s_{206}=632715$, $s_{212}=649695$.  
\end{Rem} 

\begin{proof}[Proof of Lemma \ref{lem:tri456}.] 
The sequence of the generating function 
$$
\frac{1}{(1-z^5)(1-z^7)}=\sum_{n=0}^\infty d_n z^n
$$ 
has the period $35$, and $g(5,7;p)=35 p+23$ ($p\ge 0$). 
Since $d(35 p+23;5,7)=p$, we have $d\bigl(3(35 p+23);15,21\bigr)=p$. 
As $d(n;15,21)=0$ for $n\not\equiv 0\pmod 3$, we see that 
\begin{align*} 
&d(3 n;10,15,21)=d(3 n+10;10,15,21)=d(3 n+20;10,15,21)\\
&=d(3 n;15,21)+d(3 n-30;15,21)+d(3 n-60;15,21)\\
&\qquad +\cdots+d\bigl(3(n-10\fl{n/10});15,21\bigr)\\
&=d(n;5,7)+d(n-10;5,7)+d(n-20;5,7)+\cdots+d\bigl(n-10\fl{n/10};5,7\bigr)\,. 
\end{align*} 
Hence, for even $p=2 p'$, we have 
\begin{align*} 
&d(105 p+69;10,15,21)=d(105 p+79;10,15,21)=d(105 p+89;10,15,21)\\
&=d(35 p+23;5,7)+d(35 p+13;5,7)+d(35 p+3;5,7)+\cdots\\
&\qquad +d(23;5,7)+d(13;5,7)+d(3;5,7)\\ 
&=\underbrace{p+\cdots+p}_4+\underbrace{(p-1)+\cdots+(p-1)}_3+\underbrace{(p-2)+\cdots+(p-2)}_4\\
&\qquad +\underbrace{(p-3)+\cdots+(p-3)}_3+\cdots
+0+0+0\\ 
&=4\bigl((2 p')+(2 p'-2)+\cdots+2\bigr)+3\bigl((2 p'-1)+(2 p'-3)+\cdots+1\bigr)\\
&=p'(7 p'+4)=\frac{p(7 p+8)}{4}\,. 
\end{align*} 
For odd $p=2 p'+1$, we have 
\begin{align*} 
&d(105 p+69;10,15,21)=d(105 p+79;10,15,21)=d(105 p+89;10,15,21)\\
&=d(35 p+23;5,7)+d(35 p+13;5,7)+d(35 p+3;5,7)+\cdots\\
&\qquad +d(18;5,7)+d(8;5,7)\\  
&=4\bigl((2 p'+1)+(2 p'-1)+\cdots+1\bigr)+3\bigl((2 p')+(2 p'-2)+\cdots+2\bigr)+0+0\\
&=(p'+1)(7 p'+4)=\frac{(p+1)(7 p+1)}{4}\,. 
\end{align*} 
Hence, for even $p$, 
$$
g(10,15,21;p(7 p+8)/4)=105 p+89\quad\hbox{and}\quad m_9^{(p(7 p+8)/4)}=105 p+99\,. 
$$ 
For odd $p$, 
$$
g(10,15,21;(p+1)(7 p+1)/4)=105 p+89\quad\hbox{and}\quad m_4^{((p+1)(7 p+1)/4)}=105 p+99\,. 
$$ 
Similarly, for even $p$, $d(105 p+39;10,15,21)=d(105 p+49;10,15,21)=d(105 p+59;10,15,21)$, $d(105 p+9;10,15,21)=d(105 p+19;10,15,21)=d(105 p+29;10,15,21)$, $d(105 p-21;10,15,21)=d(105 p-11;10,15,21)=d(105 p-1;10,15,21)$ and $d(105 p-51;10,15,21)=d(105 p-41;10,15,21)=d(105 p-31;10,15,21)$ are given by 
\begin{align*} 
\frac{p(7 p+8)}{4}-p&=\frac{p(7 p+4)}{4}\,,&~ 
\frac{p(7 p+8)}{4}-2 p&=\frac{7 p^2}{4}\,,\\ 
\frac{p(7 p+8)}{4}-3 p&=\frac{p(7 p-4)}{4}\,,&~ 
\frac{p(7 p+8)}{4}-4 p&=\frac{p(7 p-8)}{4}\,, 
\end{align*} 
respectively.   
For odd $p$, $d(105 p+39;10,15,21)=d(105 p+49;10,15,21)=d(105 p+59;10,15,21)$, $d(105 p+9;10,15,21)=d(105 p+19;10,15,21)=d(105 p+29;10,15,21)$, $d(105 p-21;10,15,21)=d(105 p-11;10,15,21)=d(105 p-1;10,15,21)$ and $d(105 p-51;10,15,21)=d(105 p-41;10,15,21)=d(105 p-31;10,15,21)$ are given by 
\begin{align*} 
\frac{(p+1)(7 p+1)}{4}-p&=\frac{7 p^2+4 p+1}{4}\,,\\ 
\frac{(p+1)(7 p+1)}{4}-2 p&=\frac{7 p^2+1}{4}\,,\\ 
\frac{(p+1)(7 p+1)}{4}-3 p&=\frac{7 p^2-4 p+1}{4}\,,\\ 
\frac{(p+1)(7 p+1)}{4}-4 p&=\frac{(p-1)(7 p-1)}{4}\,,  
\end{align*}  
respectively.   

Next, for even $p$, we have 
\begin{align*} 
&d(105 p+24;10,15,21)=d(105 p+34;10,15,21)=d(105 p+44;10,15,21)\\
&=d(35 p+8;5,7)+d(35 p-2;5,7)+d(35 p-12;5,7)+\cdots\\
&\qquad +d(24;5,7)+d(14;5,7)+d(4;5,7)\\ 
&=p+p+\underbrace{(p-1)+\cdots+(p-1)}_4+\underbrace{(p-2)+\cdots+(p-2)}_3\\
&\qquad +\cdots+\underbrace{2+\cdots+2}_3
+\underbrace{1+\cdots+1}_4\\ 
&=\frac{p(7 p+2)}{4}\,. 
\end{align*} 
For odd $p$, we have 
\begin{align*} 
&d(105 p+24;10,15,21)=d(105 p+34;10,15,21)=d(105 p+44;10,15,21)\\
&=p+p+\underbrace{(p-1)+\cdots+(p-1)}_4+\underbrace{(p-2)+\cdots+(p-2)}_3\\
&\qquad +\cdots+\underbrace{2+\cdots+2}_4
+\underbrace{1+\cdots+1}_3\\ 
&=\frac{7 p^2+2 p-1}{4}\,. 
\end{align*} 
Hence, for even $p$, 
$$
g(10,15,21;p(7 p+2)/4)=105 p+44\quad\hbox{and}\quad m_4^{(p(7 p+2)/4)}=105 p+54\,. 
$$ 
For odd $p$, 
$$
g(10,15,21;(7 p^2+2 p-1)/4)=105 p+44\quad\hbox{and}\quad m_9^{((7 p^2+2 p-1)/4)}=105 p+54\,. 
$$ 
Similarly, for even $p$, $d(105 p+14;10,15,21)=d(105 p+4;10,15,21)=d(105 p-6;10,15,21)$ is given by 
\begin{align*} 
\frac{p(7 p+2)}{4}-p=\frac{p(7 p-2)}{4}\,. 
\end{align*}  
For odd $p$, $d(105 p+14;10,15,21)=d(105 p+4;10,15,21)=d(105 p-6;10,15,21)$ is given by 
\begin{align*} 
\frac{7 p^2+2 p-1}{4}-p=\frac{7 p^2-2 p-1}{4}\,. 
\end{align*} 
\end{proof}

\subsection{Larger $n$}  

We can continue to obtain explicit formulas of the functions $g$, $n$ and $s$ for $n=5,6,\dots$, but the situation becomes more complicated. For example, concerning $g_p:=g(t_5,t_6,t_7;p)$, we see that 
\begin{align*}  
&g_0=125,~g_1=209,~g_2=230,~g_3=293,~g_4=314,~g_5=335,\\
&g_6=377,~g_7=398,~g_8=419,~g_9=440,~g_{10}=461,~g_{11}=482,\\
&g_{12}=503,~g_{13}=524,~g_{14}=545,~g_{16}=566,~\dots\,. 
\end{align*} 
Note that $g_{15}=g_{14}$ because the (largest) integer whose number of representations is exactly $15$ does not exist.

\section{Conclusion}   

In this paper, we are successful to obtain certain kinds of natural generalizations of Frobenius numbers together with Sylvester numbers and sums for three variables $(a_1,a_2,a_3)$, when they are consecutive triangular numbers. In other words, up to the present, any explicit formula for the maximum number, total number, and sum of numbers that can be expressed at most in $p$ ways have not been found for three variables or more. 
In this paper, explicit formulas are given when the triangular number is small, or when $p$ is small. By continuing the method presented here, we will be able to obtain explicit formulas for larger triplets and larger $p$'s, but the discussion must become more complicated.

 




\begin{thebibliography}{99}

\bibitem{Apery}  
R. Ap\'ery,  {\em 
Sur les branches superlin\'eaires des courbes alg\'ebriques}, 
C. R. Acad. Sci. Paris {\bf 222} (1946), 1198--1200. 

\bibitem{bb20}
L. Bardomero and M. Beck, {\em 
Frobenius coin-exchange generating functions},  
Amer. Math. Monthly {\bf 127} (2020), no. 4, 308--315. 

\bibitem{bgk01}   
M. Beck, I. M. Gessel and T. Komatsu, {\em    
The polynomial part of a restricted partition function related to the Frobenius problem}, 
Electron. J. Combin. {\bf 8} (No.1) (2001), \#N7.

\bibitem{bk11} 
M. Beck and C. Kifer, {\em  
An extreme family of generalized Frobenius numbers}, 
Integers {\bf 11}  (2011), A24, 639--645. 

\bibitem{bs62} 
A. Brauer and B. M. Shockley, {\em  
On a problem of Frobenius}, J. Reine Angew. Math. {\bf 211} (1962), 215--220.  

\bibitem{bs93} 
T. C. Brown and P. J. Shiue,  {\em  
A remark related to the Frobenius problem},  
Fibonacci Quart. {\bf 31} (1993), 32--36.  

\bibitem{cayley} 
A. Cayley, {\em 
On a problem of double partitions}, 
Philos. Mag. {\bf XX} (1860), 337--341. 

\bibitem{cu90} 
F. Curtis, {\em 
On formulas for the Frobenius number of a numerical semigroup}, 
Math. Scand. {\bf 67} (1990), 190--192. 

\bibitem{ko03}  
T. Komatsu, {\em  
On the number of solutions of the Diophantine equation of Frobenius--General case}, 
Math. Commun. {\bf 8} (2003), 195--206. 

\bibitem{Ko-repunit}  
T. Komatsu, {\em  
The Frobenius number associated with the number of representations for sequences of repunits}, 
C. R. Math. Acad. Sci. Paris (in press).  

\bibitem{Ko-p}
T. Komatsu, {\em 
On $p$-Frobenius and related numbers due to $p$-Ap\'ery set}, 
arXiv:2111.11021. 

\bibitem{KZ0}  
T. Komatsu and Y. Zhang, {\em 
Weighted Sylvester sums on the Frobenius set}, 
Irish Math. Soc. Bull. {\bf 87} (2021), 21--29.    

\bibitem{KZ}  
T. Komatsu and Y. Zhang, {\em 
Weighted Sylvester sums on the Frobenius set in more variables}, 
Kyushu J. Math. {\bf 76} (2022), 163--175.    

\bibitem{RR18}  
A. M. Robles-P\'erez and J. C. Rosales,  {\em  
The Frobenius number for sequences of triangular and tetrahedral numbers}, 
J. Number Theory {\bf 186} (2018), 473--492.  

\bibitem{se77}  
E. S. Selmer, {\em  
On the linear diophantine problem of Frobenius},  
J. Reine Angew. Math. {\bf 293/294} (1977), 1--17.  

\bibitem{oeis}  
N. J. A. Sloane, {\em  
The On-Line Encyclopedia of Integer Sequences}, 
available at oeis.org.  (2022).  

\bibitem{sy1857} 
J. J. Sylvester, {\em 
On the partition of numbers}, 
Quart. J. Pure Appl. Math. {\bf 1} (1857), 141--152.

\bibitem{sy1882}  
J. J. Sylvester, {\em 
On subinvariants, i.e. semi-invariants to binary quantics of an unlimited order}, 
Am. J. Math. {\bf 5} (1882), 119--136.

\bibitem{sy1884}  
J. J. Sylvester,  {\em  
Mathematical questions with their solutions},  
Educational Times {\bf 41} (1884), 21.  

\bibitem{tat05} 
J. J. Tattersall, {\em 
Elementary number theory in nine chapters}, 
Second ed. Cambridge Univ. Press, Cambridge, 2005. xii+430 pp. ISBN: 978-0-521-61524-2; 0-521-61524-0 

\bibitem{tr00}  
A. Tripathi, {\em 
The number of solutions to $a x+b y=n$}, 
Fibonacci Quart. {\bf 38} (2000), 290--293.

\bibitem{tr08}  
A. Tripathi, {\em 
On sums of positive integers that are not of the form $a x+b y$}, 
Amer. Math. Monthly {\bf 115} (2008), 363--364.  

\end{thebibliography}
\end{document}